\newtheorem{theorem}{Theorem}[section]
\newtheorem{corollary}[theorem]{Corollary}
\newtheorem{lemma}[theorem]{Lemma}
\newtheorem{definition}[theorem]{Definition}
\theoremstyle{definition}
\newtheorem{example}[theorem]{Example}
\numberwithin{equation}{section}
\theoremstyle{remark}
\newtheorem{remark}[theorem]{Remark}
\newcommand{\into}{\hookrightarrow}
\newcommand{\iso}{\cong}
\newcommand{\op}{op}
\newcommand{\Ab}{\mathbb{A}}
\newcommand{\Cb}{\mathbb{C}}
\newcommand{\C}{\mathbb{C}}
\newcommand{\Eb}{\mathbb{E}}
\newcommand{\Fb}{\mathbb{F}}
\newcommand{\F}{\mathbb{F}}
\newcommand{\Nb}{\mathbb{N}}
\newcommand{\Qb}{\mathbb{Q}}
\newcommand{\Q}{\mathbb{Q}}
\newcommand{\Zb}{\mathbb{Z}}
\newcommand{\Z}{\mathbb{Z}}
\newcommand{\V}{\mathcal V}
\newcommand{\Kbar}{\overline{K}}
\newcommand{\Fqbar}{\overline{\Fb}_q}
\newcommand{\Fpbar}{\overline{\F}_p}
\newcommand{\et}{{et}}
\newcommand{\tr}{Tr}
\newcommand{\Schemes}{\mathbf{Schemes}}
\newcommand{\Spec}{Spec}
\DeclareMathOperator{\colim}{colim}
\DeclareMathOperator{\Sym}{Sym}
\DeclareMathOperator{\PConf}{PConf}
\DeclareMathOperator{\UConf}{UConf}
\DeclareMathOperator{\Frob}{Frob}
\DeclareMathOperator{\Ind}{Ind}
\DeclareMathOperator{\Hom}{Hom}
\DeclareMathOperator{\Fix}{Fix}
\DeclareMathOperator{\Gal}{Gal}
\DeclareMathOperator{\diag}{diag}
\DeclareMathOperator{\res}{res}
\DeclareMathOperator{\Emb}{Emb}
\title{\'Etale homological stability and arithmetic statistics}
\author{Benson Farb and Jesse Wolfson \thanks{B.F. is supported in part by NSF Grant Nos. DMS-1105643 and DMS-1406209. J.W. is supported in part by NSF Grant No. DMS-1400349.}}
\begin{document}
\maketitle
\begin{abstract}
We contribute to the arithmetic/topology dictionary by relating asymptotic point counts and arithmetic statistics over finite fields to homological stability and representation stability over $\Cb$ in the example of configuration spaces of $n$ points in smooth varieties. To do this, we  import the method of homological stability from the realm of topology into the theory of \'{e}tale cohomology; in particular we give the first examples of stability of \'{e}tale cohomology groups as Galois representations where the Galois actions are not already explicitly known. We then establish subexponential bounds on the growth of the unstable cohomology, and we apply this and \'etale homological stability to compute the large $n$ limits of various arithmetic statistics of configuration spaces of varieties over $\F_q$.
\end{abstract}

\section{Introduction}
Let $X$ be a scheme defined over $\Z$.  The Weil conjectures provide a fundamental link between the topology of $X(\C)$ and the arithmetic of $X(\F_q)$.  As first indicated by work of Ellenberg-Venkatesh-Westerland \cite{EVW} and then by Church-Ellenberg-Farb \cite{CEF2}, stability phenomena in topology should have, via this remarkable correspondence, implications on the arithmetic side.  We summarize this in the following table, with the rows going from least to most general.



\medskip
\begin{center}
\begin{tabular}{|c|c|}
\hline
{Topology} \qquad\quad& {Arithmetic} \\
\hline
$H^*(X(\C))$& $|X(\F_q)|$\\
\hline
homological stability of $X_n$ & asymptotics of $|X_n(\F_q)|$ as $n\rightarrow\infty$\\
\hline
representation stability & asymptotics of arithmetic \\
& statistics on $X_n(\F_q)$\\
\hline
\end{tabular}
\end{center}

One of the main goals of the present paper is to realize the bottom rows of this table for varieties $X_n$ of configurations of $n$ points (ordered and unordered) on a given smooth variety.  We view our results as giving a ``proof of concept'' for these  additions to the topology/arithmetic dictionary.  We accomplish this in two steps:
\begin{enumerate}
\item We prove what we call {\em \'{e}tale homological (and representation) stability} for $H^*_{\et}(X_{n/\Fqbar};\Q_\ell)$.  This allows us to break up
$H^*_{\et}(X_{n/\Fqbar};\Q_\ell)$ into tow parts: stable and unstable.
\item We obtain sub-exponential bounds on the growth of the unstable part of $H^*_{\et}(X_{n/\Fqbar};\Q_\ell)$.  This allows us to prove that this unstable part does not contribute to the limiting density of $|X_n(\F_q)|$.  As we explain below, the absence of such bounds is a significant obstruction to understanding the asymptotic point counts of many families of interest.
\end{enumerate}

In the \'{e}tale context, stability of each $H^i _{\et}(X_{n/\Fqbar};\Q_\ell)$ as a Galois representation, not just as a vector space, is crucial.  It is the difference between proving that limits such as $\lim_{n\rightarrow\infty}q^{-\dim X_n}|X_n(\F_q)|$ exist, and actually computing the limiting answer.

\paragraph{Homological stability. } A sequence $\{X_n\}$ of spaces or groups is said to satisfy {\em homological stability} over a ring $R$ if $H_i(X_n;R)$ or $H^i(X_n;R)$ is independent of $n$ for $n\geq D(i)$; the number $D(i)$ is called the {\em stable range}.   Typically, but not always, there are maps $\psi_n:X_n\to X_{n+1}$ or $\phi_n: X_{n+1}\to X_n$ inducing isomorphisms
\[ (\psi_n)_*:H_i(X_n;R)\to H_i(X_{n+1};R)\ \ \ \text{or}\ \ \ \phi_n^*:H^i(X_n;R)\to H^i(X_{n+1};R).\]  Examples of $X_n$ satisfying homological stability include classifying spaces of symmetric groups $S_n$ (Nakaoka), arithmetic groups like ${\rm SL}_n\Z$ (Borel), the moduli spaces ${\cal A}_g$ (Borel) and ${\cal M}_g$ (Harer), and also configuration spaces $\UConf_n(M)$ of  unordered $n$-tuples of distinct points on a manifold $M$ (Arnol'd, McDuff, Segal, Church).  Homological stability has been a powerful tool in topology.  It converts an {\it a priori} infinite computation to a finite one.  Further, the stable answer, $H_i(X_n;R)$ for $n\geq D(i)$, can often be computed explicitly.

Many natural sequences $X_n$ come equipped with actions of groups $G_n$ by automorphisms.
A basic example is the space $\PConf_n(M)$ of ordered $n$-tuples of distinct points on a manifold $M$, on which the symmetric group $S_n$ acts by permuting the ordering.  Such spaces almost never satisfy homological stability, but they instead often satisfy {\em representation stability} : the decomposition of $H^i(X_n;\Q)$ into a sum of irreducible $S_n$-representations stabilizes in a precise sense (see \cite{CF,CEF1}, \S\ref{sec:repstabsum} below, and \cite{Fa} for a survey).   When $R=\Q$, plugging the trivial representation into this theory gives classical homological stability for the sequence $X_n/S_n$.    So for example representation stability for $\PConf_n(M)$ gives classical homological stability for
the space $\UConf_n(M)=\PConf_n(M)/S_n$ of unordered $n$-tuples of distinct points on $M$; see \cite{Ch}.  The theory of representation stability, initiated by Church, Ellenberg and Farb, is currently undergoing a rapid development.

\paragraph{\'{E}tale homological stability.} Consider a scheme $Y$, smooth over $\Z[1/N]$ for some $N$.  We can extend scalars to $\C$ and consider the complex points $Y(\C)$, and we can also reduce modulo $p$ for any prime $p\nmid N$.  This gives a variety defined over $\F_p$, and for any positive power $q=p^d$ we can consider both the $\F_q$-points as well as the $\Fqbar$-points of $Y$, where $\Fqbar$ is the algebraic closure of $\F_q$.

One of the most fundamental arithmetic invariants attached to $Y$ is its
{\em \'{e}tale cohomology} $H^*_{\et}(Y_{/\Kbar};\Q_\ell)$, where $K$ is a number field or finite field of characteristic prime to $N$, and where $\ell\neq p$ is prime.  The Galois action on $Y_{/\Kbar}$ induces a Galois action on each $\Q_\ell$-vector space $H^i_{\et}(Y_{/\Kbar};\Q_\ell)$, and this action is a crucial part of the data.

Now let $X_n$ be a sequence of schemes that are smooth over $\Z[1/N]$ for some $N$, for example $X_n=\PConf_n(Y)$ or $X_n=\UConf_n(Y)$ for $Y$ smooth over $\Z[1/N]$.    Given the usefulness of homological stability in topology, one wants to prove such stability for $H^i_{\et}(X_{n/\Kbar};\Q_\ell)$ for $K$ a number field or a finite field with characteristic $p\nmid  N$.   There are a number of different notions of what ``stability'' means in this context (see \S \ref{sec:repstability} below). We adopt the strongest of these possibilities.

\begin{definition}[{\bf \'{E}tale homological stability}]
We say that a sequence $X_n$ of schemes satisfies {\em \'{e}tale homological stability} over
a field $K$ if for each $i\geq 0$ there exists $D=D(i)$ so that the isomorphism type of $H^i_{\et}(X_{n/\Kbar};\Q_\ell)$ as a $\Gal(\Kbar/K)$-representation does not depend on $n$ for $n\geq D$.  The function $D(i)$ is called the {\em stable range}.
\end{definition}

When each $X_n$ in addition admits an $S_n$-action, such as $X_n=\PConf_n(Y)$ or $Y^n$, we have a corresponding notion of {\em \'{e}tale representation stability} over $K$.  This definition is a bit more involved; see \S\ref{subsection:repstab}.  It implies \'{e}tale homological stability for the sequence of varieties $X_n/S_n$.

As far as we know, there have previously been only two results in this direction.  The first is a beautiful paper of Ellenberg-Venkatesh-Westerland \cite{EVW}, where a sequence $X_n$ of certain Hurwitz varieties is considered.  They prove that each $H^i_{\et}(X_{n/\Fqbar};\Q_\ell)$ stabilizes as a $\Q_\ell$ vector space for $n\geq D(i)$, and apply this to prove a function field version of the Cohen-Lenstra Heuristics in number theory.  However, as discussed  explicitly at the end of \S 7 of \cite{EVW}, the argument there does not prove that the $\Gal(\Fqbar/\F_q)$-representations stabilize.

The other previous result in the direction of \'{e}tale homological stability was proved by Church-Ellenberg-Farb \cite{CEF2}.  They showed that each sequence $H^i_{\et}(\PConf_n(\Ab^1)_{/\Fqbar};\Q_\ell)$ is \'{e}tale representation stable, from which it follows that $H^i_{\et}(\UConf_n(\Ab^1)_{/\Fqbar};\Q_\ell)$ satisfies \'{e}tale homological stability.  However, in this case the stability of the $\Frob_q$-action is just a consequence of the fact that this action is semisimple with the eigenvalues known quite explicitly, namely they are all $q^{i}$ on $H^i_{\et}(\PConf_n(\Ab^1)_{/\Fqbar};\Q_\ell)$.

\subsection{Statement of results}
In this paper we give the first examples of \'{e}tale homological and representation stability where the Galois actions are not known explicitly, but are only proved to stabilize. To state our results we need two different descriptions of representations of the symmetric groups $S_n$.

Let $X_i$ be the class function on all symmetric groups $S_n,n\geq 1$ given by setting $X_i(\sigma)$ to be the number of $i$-cycles in the cycle decomposition of $\sigma$.  A {\em character polynomial} is any polynomial $P\in\Q[X_1,X_2,\ldots]$; it is a class function on each $S_n, n\geq 1$.  The {\em degree} of a character polynomial is defined by setting $\deg X_i:=i$.  See \S\ref{sec:repstabsum} below for more details.  As shown by Church-Ellenberg-Farb \cite{CEF1}, character polynomials give a compact and uniform way of describing the character of certain infinite sequences of $S_n$-representations for $n=1,2,\ldots $.

A \emph{partition} of
$n$ is a sequence $\lambda=(\lambda_1\geq \cdots \geq \lambda_r\geq0)$ with $\sum_i\lambda_i=n$.  The irreducible representations $V(\lambda)$ of $S_n$ are classified by partitions $\lambda\vdash n$.  A partition $\lambda\vdash k$ gives a sequence $V(\lambda)_n$ of irreducible $S_n$-representations for $n\geq k+\lambda_1$ by defining
$V(\lambda)_n$  to be the irreducible representation of $S_n$ corresponding to the partition
$(n-k,\lambda_1,\ldots ,\lambda_r)$.  Every irreducible representation of
$S_n$ is of the form $V(\lambda)_n$ for a unique partition
$\lambda$; for example the trivial and standard representations of $S_n$ are $V(0)_n$ and $V(1)_n$, respectively.

\bigskip
\noindent
{\bf Theorem A (\'{E}tale representation stability):} {\it Let $Y$ be a scheme, smooth over $\Z[1/N]$ for some $N$, with geometrically connected fibers.  Suppose that $Y$ is projective over $\Z[1/N]$, or more generally normally compactifiable (see \S\ref{subsection:basechange} below for the precise definition). Let $K$ be either a number field or a finite field of characteristic $p\nmid N$.

For each $i\geq 0$, the sequence $H_{\et}^i({\PConf_n(Y)}_{/\Kbar};\Q_\ell)$ of $\Gal(\Kbar/K)$-modules  satisfies \'{e}tale representation stability  \footnote{See \S\ref{subsection:repstab} for the precise definition of  \'{e}tale representation stability.} over $K$ with stable range
$D(i)=2i$ for $\dim Y\geq 2$ and $D(i)=4i$ for  $\dim Y=1$.  In particular:

\bigskip
\noindent
{\bf 1. Inductive description: } For all $n\geq 0$, there is an isomorphism of $\Gal(\Kbar/K)$-representations:
\begin{equation}
\label{eq:colim}
H^i_{\et}(\PConf_n(Y)_{/\Kbar};\Q_\ell)\cong\ \colim_{S} H^i_{\et}(\PConf_{|S|}(Y)_{/\Kbar};\Q_\ell)
\end{equation}
where the colimit is taken over the poset of all subsets $S\subset \{1,\ldots ,n\}$ such that
$|S|\leq D(i)$.  This gives, for each $n\geq D(i)$, a recipe for building the $\Gal(\Kbar/K)$-representation $H^i_{\et}(\PConf_n(Y)_{/\Kbar};\Q_\ell)$ from a fixed finite collection of $\Gal(\Kbar/K)$-representations.

\bigskip
\noindent
{\bf 2. Stability of isotypics: }
For each character polynomial $P$, there exists a unique virtual  $\Gal(\Kbar/K)$-representation $H^i_{\et}(\PConf(Y))_P$ over $\Q_\ell$, linear in $P$, so that when $P=\chi_{V(\lambda)}$ for some $\lambda\vdash k$, there exists $D$ such that for all $n\ge D$:
\begin{equation*}
    H^i_{\et}(\PConf(Y))_{\chi_{V(\lambda)}}=H^i_{\et}(\PConf_n(Y)_{/\Kbar};\Q_\ell)\otimes_{\Q_\ell[S_n]}V(\lambda)_n
\end{equation*}
and the right hand side is independent of $n$ \emph{as a $\Gal(\Kbar/K)$-representation}.

\bigskip
\noindent
{\bf 3. Polynomial characters: } There exists a character polynomial $Q(X_1,\ldots ,X_r)$ so that for all $n\geq D(i)$:
\[\chi_{H_{\et}^i({\PConf_n(Y)}_{/\Kbar};\Q_\ell)}(\sigma)=Q(X_1(\sigma),\ldots ,X_r(\sigma)) \ \ \ \text{for all $\sigma\in S_n$}.\]
where $\deg(Q)\leq i$ if $\dim Y>1$ and $\deg(Q)\leq 2i$ if $\dim Y=1$.
}

\bigskip
\noindent
{\bf Remarks: }
\begin{enumerate}
\item Our proof of Theorem A shows that Item (1) holds with $\Q_\ell$ replaced by $\Z_\ell$ or $\Z/\ell^n\Z$.
\item In \S\ref{subsection:repstab} we prove the analogous theorem with $\PConf_n(Y)$ replaced by $Y^n$.
\item Nir Gadish \cite{Ga} has recently isolated a concept of finitely generated $I$-poset, for a wide class of categories $I$, and has used this to prove \'etale representation stability for a rich class of sequences of complements of linear subspace arrangements.
\end{enumerate}

Plugging in $P=1$ into Item (2) of Theorem A gives the following.

\bigskip
\noindent
{\bf Corollary A' (\'{E}tale homological stability):} {\it With terminology as in Theorem A, the sequence $H_{\et}^i(\UConf_n(Y)_{/\Kbar};\Q_\ell)$ satisfies \'{e}tale homological stability over $K$: these $\Gal(\Kbar/K)$-representations do not depend on $n$ for $n\geq D(i)$.}

\bigskip
\noindent
{\bf Remarks: }
\begin{enumerate}
\item  \'{E}tale homological stability for $\Sym^nY$ follows from K\"unneth and transfer, just as in the topological case.

\item The stability of \'{e}tale homotopy groups for $\Sym^nY$ was recently proved by Tripathy \cite{Tr}.

\item Quoc Ho  \cite{Ho} has recently given an independent proof of
 Corollary A'  for $Y$ smooth not necessarily normally compactifiable.  His method is based on factorization homology, and is quite different from the methods of this paper.
\end{enumerate}

\paragraph{Stability of arithmetic statistics.}  The application of homological stability to arithmetic statistics was pioneered by Ellenberg-Venkatesh-Westerland in \cite{EVW}.  The  fundamental link is provided by the {\em Grothendieck-Lefschetz Trace Formula} \footnote{Here we have assumed that $Z$ is smooth and applied Poincar\'{e} Duality to the usual Grothendieck-Lefschetz Formula.}:

\begin{equation}
\left|Z(\F_q)\right|= q^{\dim(Z)}\sum_{i\geq 0}(-1)^i
\tr\big(\Frob_q:H^i_{\et}(Z_{/\Fqbar};\Q_\ell)^{*}\to H^i_{\et}(Z_{/\Fqbar};\Q_\ell)^{*}\big)
\end{equation}
and its twisted version (see \eqref{eq:GLtwisted} below).  Given Deligne's theorem
\cite[Theorem 1.6]{De1} that any eigenvalue $\lambda$ of $\Frob_q$ on $H^i_{\et}(Z_{/\Fqbar};\Q_\ell)^*$ satisfies $|\lambda|\leq q^{-i/2}$ , one can bound the number $|Z(\F_q)|$ of $\F_q$-points via
\[|Z(\F_q)|\leq q^{\dim Z} \sum_{i=0}^{2\dim Z}b_iq^{-i/2}\]
where $b_i:=\dim H^i_{\et}(Z_{/\Fqbar};\Q_\ell)^*$.  Applying this reasoning to a sequence $Z_n$ of smooth varieties gives
\begin{equation}
\label{eq:pointcount3}
q^{-\dim Z_n}|Z_n(\F_q)|\leq  \sum_{i=0}^{2\dim Z_n}b_i(n)q^{-i/2}
\end{equation}
where we have emphasized via notation that $b_i$ is a function of $n$.  It seems that \'{e}tale homological stability, namely the fact that $b_i(n)$ is constant for $n\geq D(i)$, should imply that the limit as $n\to\infty$ of the left-hand side of \eqref{eq:pointcount3}  exists. However, it could be that $\dim(Z_n)$ goes to $\infty$ with $n$ and that $b_i(n)$ grows more quickly than $q^{i/2}$, even for any $q$;  this would imply the divergence of the right-hand side of \eqref{eq:pointcount3}.  This super-exponential growth is known to occur in natural examples, for example for $Z_n$ the moduli space of genus $n$ smooth algebraic curves, and also for $Z_n$ the moduli space of $n$-dimensional principally polarized abelian varieties.  In the latter example, recent work of Lipnowski-Tsimerman \cite{LT} shows that this growth actually does change the point count $|Z_n(\F_q)|$, as they show this number grows more quickly than the expected $q^{\dim Z_n}$.

Thus, in order to apply \'{e}tale homological stability to obtain the existence of asymptotic point counts in a given example, it is necessary to prove sub-exponential (in $i$) bounds on $b_i(n)$, independent of $n$.  In other words, control of the {\em unstable} \'{e}tale cohomology $H^i_{\et}({Z_n}_{/\Fqbar};\Q_\ell)^*$ is needed.

Proving such bounds is a major obstruction for arithmetic applications; see \S\ref{section:convergent:cohomology} for a discussion.  This problem is a very special case (namely the case $P\equiv 1$) of more general arithmetic statistics, where one needs a twisted version of the Grothendieck-Lefschetz formula, and where the control on the unstable cohomology is even more difficult to prove; see \S\ref{section:convergent:cohomology} below. A significant part of this paper, \S\ref{section:convergent:cohomology}, is devoted to overcoming this problem for the examples $X^n$ and $\PConf_n(X)$. We obtain the following results.

\bigskip
\noindent
{\bf Theorem B (Bounding the unstable cohomology)}:
{\it Let $X$ be a smooth, orientable manifold with $\dim(H^*(X;\Q))<\infty$ (e.g. $X$ compact).  Then for any character polynomial $P$ there exists a function $F_P(i)$, subexponential in $i$, such that for all $n\geq 1$:
            \[\langle P,(H^i(\PConf_n(X);\Q)\rangle_{S_n}\leq F_P(i).\]
}

In \S\ref{section:arithmetic:stats}, we apply Theorems A and B to obtain the following.

\bigskip
\noindent
{\bf Theorem C (Arithmetic statistics of configuration spaces}): {\it
Let $X$ be a scheme, smooth over $\Z[1/N]$ for some $N$, with geometrically connected fibers.  Suppose that $X$ is projective over $\Z[1/N]$ or, more generally, normally compactifiable at $p\nmid N$.  Let $P$ be any character polynomial, and denote by $H^i_{\et}(\PConf(X))_P$ the stable $P$-isotypic part of the \'etale cohomology of $\PConf_\bullet(X)_{\Fqbar}$, and by $H^i_{\et}(\UConf(X))$ the stable cohomology of $\UConf_\bullet(X)_{\Fqbar}$ (see \S\ref{section:arithmetic:stats} for precise definitions).  Then
\[\lim_{n \to \infty} q^{-n\dim X} \sum_{y \in \UConf_n(X)(\F_q)} P(y)
=\sum_{i=0}^\infty(-1)^i
\tr\big(\Frob_q\circlearrowleft H^i_{\et}(\PConf(X))_P^{*}\big),
\] in particular, both sides of the above converge.  Specializing to $P=1$, we obtain
\begin{equation*}
    \lim_{n\to\infty}q^{-n\dim X} |\UConf_n(X)(\F_q)|=\sum_{i=0}^\infty(-1)^i
\tr\big(\Frob_q\circlearrowleft H^i_{\et}(\UConf(X))^{*}\big).
\end{equation*}
}

\bigskip
A different description of the left hand side of Theorem B, established using analytic methods, will appear in forthcoming work of Weiyan Chen \cite{Che}. En route to proving Theorems B and C, we also prove the analogous statements for $\Sym^n(X)$; see \S\ref{section:arithmetic:stats}.

\paragraph{Acknowledgements. } It is a pleasure to thank  George Andrews, Kathrin Bringmann, Mark Kisin, and Ken Ono for useful discussions. We thank Nir Gadish and Brian Conrad for very helpful conversations about twisted $\ell$-adic sheaves and transfer.  We thank Weiyan Chen, Jordan Ellenberg and Burt Totaro for numerous helpful comments on an earlier draft.  Finally, it is a pleasure to thank Matt Emerton for his careful reading and many detailed comments on an earlier draft of this paper.

\section{\'{E}tale Representation stability}
 \label{sec:repstability}

In this section, we briefly summarize the theory of representation stability and FI-modules, as it is used in topology, as well as some of its consequences.  This theory was developed by Church, Ellenberg and Farb \cite{CF,CEF1}, and later with Nagpal \cite{CEFN}; see \cite{Fa} for a survey.  We refer the reader to these references for details.  We then give a general setup for proving similar stability theorems in \'{e}tale cohomology.

\subsection{Quick summary of representation stability and FI-modules}
\label{sec:repstabsum}

An {\em FI-module} $V$ over a Noetherian ring $R$ is a functor from the category FI of finite sets and injections to the category of $R$-modules.  Thus to each natural number $n$ we have associated an $R$-module $V_n$ with an $S_n$ action, with a map $V_m\to V_n$ for each injection $\{1,\ldots ,m\}\to \{1,\ldots ,n\}$.  Recall that the opposite category
${\rm FI}^{\rm op}$ is the same as FI but with arrows reversed.  A {\em co-FI module} over $R$ is a functor from ${\rm FI}^{\rm op}$ to $R$-modules.  We also have the associated notions of
{\em FI-space, FI-scheme}, etc., and the associated co-FI versions.

An FI-module $V$ is {\em finitely generated} if there is a finite set $S$ of elements  in $\coprod_iV_i$ so that no proper sub-FI-module of $V$ contains $S$.  One of the reasons that we care about finitely-generated FI-modules is the following theorem.

\begin{theorem}[{\bf Structural properties of finitely-generated FI-modules}]
\label{theorem:FIproperties}
Let $V$ be an FI-module over a commutative Noetherian ring $R$.  If $V$ is
finitely-generated then:

\medskip
\noindent
{\bf Representation stability (\cite{CEF1}): } When $R$ is a field of characteristic $0$, finite generation of $V$ implies  representation stability in the sense of \cite{CF} for the sequence $\{V_n\}$ of $S_n$-representations.

\smallskip
\noindent
{\bf Inductive description (\cite{CEFN}): } Let $V_\bullet$ be a finitely-generated FI-module over a Noetherian ring $R$. Then there exists some $N \ge 0$ such that for all $n\in\Nb$, there is a natural isomorphism
\begin{equation*}
    V_n\cong\colim_{S\subset[n],|S|\le N} V_S,
\end{equation*}
i.e. these isomorphisms commute with homomorphisms of $FI$-modules. By definition, the \emph{stable range of $V_\bullet$} $N(V)$ is the minimal such $N$.

\smallskip
\noindent
{\bf Isomorphism of trivial isotypics (\cite{Ch}):} Let $V_\bullet$ be a finitely-generated FI-module over a Noetherian ring $R$ with stable range $N(V)$.  Then for all $n\ge N(V)$ the map $V_n\to V_{n+1}$, given by averaging the structure maps, induces an isomorphism
\begin{equation*}
    V_n^{S_n}\to^\cong  V_{n+1}^{S_{n+1}}.
\end{equation*}
\end{theorem}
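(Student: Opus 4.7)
The plan is to treat the three assertions as cascading consequences of a single structural principle: any finitely generated FI-module $V$ over a Noetherian ring admits, by the Noetherianity of $\mathrm{FI}$, a finite presentation by ``representable'' FI-modules $M(m)$, where $M(m)_n := R[\Hom_{\mathrm{FI}}([m],[n])]$ is the free $R$-module on injections $[m]\hookrightarrow[n]$. Each of the three assertions then reduces to verifying the corresponding claim for the building blocks $M(m)$, and then checking that the relevant operation (multiplicity of irreducibles, colimit over subsets, passage to invariants) behaves well with respect to such a presentation.

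For \emph{representation stability}, I would first compute $M(m)_n \cong \Ind_{S_{n-m}\times S_m}^{S_n} R[S_m]$ over a characteristic-zero field, and observe that its decomposition into irreducible $S_n$-representations is governed by a Pieri-type formula. In particular, the multiplicity of $V(\lambda)_n$ in $M(m)_n$ becomes independent of $n$ once $n - |\lambda| \geq \lambda_1$. Cokernels between finite direct sums of representables inherit this eventual stabilization of each multiplicity, and Noetherianity ensures that the kernel of the defining presentation is again finitely generated so the argument closes up.

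For the \emph{inductive description}, I would first verify the formula $M(m)_n \cong \colim_{|S|\leq m} M(m)_S$ by direct inspection: the colimit runs over the poset of subsets $S \subset [n]$ of size at most $m$, and identifies with the free $R$-module on injections $[m]\hookrightarrow[n]$ by factoring each injection through its image. Since colimits commute with colimits, the identity passes to any finite direct sum of representables, and hence to any cokernel thereof, which by the structural principle covers all of $V$. The main technical input, and what I expect to be the main obstacle, is the existence of a uniform stable range $N(V)$ allowing one to choose a presentation with all generators and relations in degree $\leq N(V)$; this is exactly the content of the Noetherianity results of CEFN, without which one cannot control the generating degree of the kernel.

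For the \emph{isomorphism of trivial isotypics}, the averaging map $V_n \to V_{n+1}$ factors through $V_{n+1}^{S_{n+1}}$ and, under the colimit description from the previous item, corresponds to enlarging the ambient set from $[n]$ to $[n+1]$ while leaving the indexing poset of subsets of size $\leq N(V)$ essentially unchanged. Once $n \geq N(V)$, the two colimit diagrams compute the same $R$-module and the induced map on invariants is forced to be an isomorphism. Equivalently, representation stability already identifies $\dim_R V_n^{S_n}$ with the (now-constant) multiplicity of $V(0)_n$ in $V_n$, and one need only check that the averaging map is nonzero on this isotypic component, which follows from the compatibility of the FI-structure maps with the $S_n$-action.
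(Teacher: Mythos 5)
The paper does not prove this theorem at all: it is presented purely as a summary of three known results, cited to [CEF1], [CEFN], and [Ch], with no argument given. So there is no ``paper's own proof'' to match; your sketch can only be measured against the original sources. Your overall framework --- express a finitely generated $V$ via a finite presentation by representables $M(m)$, using Noetherianity of FI to bound the degrees of generators and relations --- is indeed the strategy of [CEFN], and your treatment of the \emph{inductive description} is essentially right: the identity $M(m)_n \cong \colim_{|S|\le m} M(m)_S$ is a direct check, the functor $\colim_{|S|\le N}$ is right exact, and Noetherianity supplies the uniform bound $N$. For \emph{representation stability} you name the right ingredients (Pieri-type stabilization of multiplicities in $\Ind_{S_{n-m}\times S_m}^{S_n}R[S_m]$), but the phrase ``cokernels inherit this'' is precisely where the work lives: the multiplicity of $V(\lambda)_n$ in a cokernel is a difference of multiplicities, and one must show the image of the presentation map stabilizes multiplicity-by-multiplicity, which does not follow automatically from stabilization in the ambient free objects.

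The genuine gap is in the \emph{isomorphism of trivial isotypics}, which the theorem asserts over an arbitrary Noetherian ring $R$ while both of your arguments implicitly assume $R$ is a field of characteristic zero. Your second (``equivalent'') argument invokes representation stability --- which the theorem itself makes available only for characteristic-$0$ fields --- then writes $\dim_R V_n^{S_n}$ and equates it with a multiplicity of $V(0)_n$, which has no meaning over a general Noetherian ring; and even in the field case, ``the averaging map is nonzero on this isotypic component'' gives neither injectivity nor surjectivity on its own. Your first argument asserts that the two colimit diagrams for $n$ and $n+1$ ``compute the same $R$-module'' and that the map on invariants is therefore ``forced'' to be an isomorphism, but the indexing posets of subsets of $[n]$ and of $[n+1]$ of size $\le N(V)$ are genuinely different, and more importantly invariants is a \emph{limit}, which does not commute with the colimit appearing in the inductive description, so $V_n^{S_n}$ does not inherit the colimit presentation in any direct way. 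The actual argument (in [CEFN], via stability degree, or in [Ch]) requires a separate injectivity/surjectivity analysis of the transfer/averaging map that your sketch omits.
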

We remark that the isomorphism of trivial isotypics illustrates one of the key advantages of considering (co-)FI-spaces: while stabilization maps for many natural sequences of spaces or schemes do not naively exist, a (co-)FI-space $Z_\bullet$ comes equipped with canonical rational correspondences from $Z_{n+1}/S_{n+1}$ to $Z_n/S_n$.

We will also need the following.
\begin{lemma}\label{lemma:stableisotypics}
    Let $V_\bullet$ and $W_\bullet$ be finitely generated $FI$-modules, and let $N$ be the sum of their stable ranges. Then for all $n\ge N$, the maps $V_n\to V_{n+1}$ and $W_n\to W_{n+1}$ (associated to $\{1,\ldots,n\}\subset\{1,\ldots,n+1\}$) induce isomorphisms
    \begin{equation*}
        V_n\otimes_{\Q[S_n]}W_n\to^\cong V_{n+1}\otimes_{\Q[S_{n+1}]}W_{n+1}
    \end{equation*}
    that are natural in both variables with respect to homomorphisms of $FI$-modules.
\end{lemma}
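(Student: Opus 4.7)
The plan is to apply the isomorphism of trivial isotypics from Theorem 2.1 to the pointwise tensor product FI-module $V\otimes W$, defined by $(V\otimes W)_n:=V_n\otimes_{\Q}W_n$ with FI-morphisms acting diagonally. First, because $\Q[S_n]$ is semisimple in characteristic zero, averaging supplies a natural identification
\[
V_n\otimes_{\Q[S_n]}W_n \;=\; (V_n\otimes_{\Q}W_n)_{S_n} \;\cong\; (V_n\otimes_{\Q}W_n)^{S_n},
\]
and under this identification the map in the lemma, induced by the structure map $\iota^V\otimes\iota^W$ associated to $\{1,\ldots,n\}\subset\{1,\ldots,n+1\}$, coincides with the map on (co)invariants induced by any other structure map of $V\otimes W$, since any two injections $[n]\hookrightarrow[n+1]$ differ by postcomposition with an element of $S_{n+1}$ and such postcomposition is trivial on $S_{n+1}$-(co)invariants. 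Thus the lemma reduces to verifying the trivial isotypics isomorphism of Theorem 2.1 for the FI-module $V\otimes W$ at stable range $\le N(V)+N(W)$.

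The core step is therefore to show that $V\otimes W$ is itself finitely generated with stable range at most $N:=N(V)+N(W)$. Using the inductive colimit description of each factor from Theorem 2.1 together with the fact that tensor product of $\Q$-vector spaces commutes with colimits,
\[
V_n\otimes W_n \;\cong\; \Bigl(\colim_{|S|\le N(V)}V_S\Bigr)\otimes\Bigl(\colim_{|T|\le N(W)}W_T\Bigr) \;\cong\; \colim_{(S,T)}V_S\otimes W_T,
\]
where $S,T$ range over subsets of $[n]$ of the stated sizes. Each term $V_S\otimes W_T$ factors through $(V\otimes W)_{S\cup T}$ via the structure maps $S\hookrightarrow S\cup T$ and $T\hookrightarrow S\cup T$, and $|S\cup T|\le N$; a cofinality argument then identifies this colimit with $\colim_{U\subseteq[n],\,|U|\le N}(V\otimes W)_U$, exhibiting $V\otimes W$ as finitely generated with stable range at most $N$.

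Applying the trivial isotypics isomorphism of Theorem 2.1 to $V\otimes W$ then yields, for all $n\ge N$, the desired isomorphism $V_n\otimes_{\Q[S_n]}W_n\cong V_{n+1}\otimes_{\Q[S_{n+1}]}W_{n+1}$; naturality in both variables is inherited from the functoriality of the pointwise tensor product, of the (co)invariants functor, and of the trivial isotypics isomorphism in its FI-module input. The main obstacle is the stable-range bound on $V\otimes W$: although the colimit identity is categorically formal, one has to track the diagonal FI-action carefully to ensure that the passage from the double colimit over pairs $(S,T)$ to a single colimit over $U\subseteq[n]$ of size $\le N$ is genuinely cofinal, rather than introducing extra relations or missing generators.
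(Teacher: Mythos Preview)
Your approach is essentially the same as the paper's: both reduce the lemma to the fact that the pointwise tensor product $V\otimes W$ is a finitely generated FI-module with stable range at most $N(V)+N(W)$, and then apply the stabilization of trivial isotypics/coinvariants. The paper simply cites \cite[Proposition~2.3.6]{CEF1} for the additivity of stable ranges and \cite[Proposition~3.3.3]{CEF1} to pass from stable range to stability degree, whereas you attempt to reprove the additivity directly from the colimit description in Theorem~\ref{theorem:FIproperties}.

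Your colimit argument is correct in outline, but what you call a ``cofinality argument'' is not literally cofinality, and the point you flag at the end is exactly where the work lies. The passage from $\colim_{(S,T)}V_S\otimes W_T$ to $\colim_{|U|\le N}(V\otimes W)_U$ is a left Kan extension computation: one must check that for each $U$ with $|U|\le N$, the restricted colimit $\colim_{S\cup T\subseteq U,\,|S|\le N(V),\,|T|\le N(W)}V_S\otimes W_T$ agrees with $V_U\otimes W_U$. This holds precisely because the inductive description of Theorem~\ref{theorem:FIproperties} is asserted for \emph{all} $n$, not merely $n$ large, so $V_U\cong\colim_{S\subseteq U,\,|S|\le N(V)}V_S$ and likewise for $W$, and tensoring these gives the claim. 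Once you make that step explicit, your argument is complete and yields exactly what the paper obtains by citation.
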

\begin{proof}
    By \cite[Proposition 2.3.6]{CEF1}, the tensor product $V_\bullet\otimes_{\Q} W_\bullet$ is finitely generated since $V_\bullet$ and $W_\bullet$ are. Applying the co-invariants functor, we obtain the functor
    \begin{equation*}
        n\mapsto V_n\otimes_{\Q[S_n]}W_n.
    \end{equation*}
    Because stable ranges add under tensor product (\cite[Proposition 2.3.6]{CEF1}), and because the stability degree (cf. \cite[Definition 3.1.3]{CEF1}) is less than or equal to the stable range (\cite[Proposition 3.3.3]{CEF1}), the map
    \begin{equation*}
        V_n\otimes_{\Q[S_n]}W_n\to V_{n+1}\otimes_{\Q[S_{n+1}]}W_{n+1}
    \end{equation*}
    is an isomorphism for $n\ge N$.
\end{proof}

\paragraph{Character polynomials.}  Character polynomials and their degree were defined in the introduction.  Let $\langle P,Q\rangle$ denote the inner product of $S_n$-characters.  The expectations
of character polynomials \[\Eb_{\sigma\in S_n} P_n(\sigma):=\frac{1}{n!}\sum_{\sigma\in S_n}P_n(\sigma)=\langle P_n,1\rangle\]  compute the averages of natural combinatorial statistics with respect to the uniform distribution on $S_n$.  As shown in Proposition 2.2 of \cite{CEF2}, the inner product  $\langle P_n,Q_n\rangle$ of character polynomials $P,Q\in \Q[X_1,X_2,\ldots]$ is independent of $n$ once $n\geq \deg P+\deg Q$.

One remarkable property of finitely-generated FI-modules $V$ is that the characters of the $S_n$-representations $V_n$ are, for large enough $n$, given by a single polynomial.

\begin{theorem}[{\bf Polynomiality of characters \cite{CEFN}}]
\label{theorem:FIproperties2}
Let $V$ be an FI-module over a field of characteristic $0$.   If $V$ is finitely-generated then
the characters $\chi_{V_n}$ of the $S_n$-representations $V_n$ are {\em eventually polynomial}: there exists $N\geq 0$ and a polynomial $P(X_1,\ldots ,X_r)$, for some $r>0$, so that
\begin{equation}
\label{eq:charpolthm:a}
\chi_{V_n}=P(X_1,\ldots ,X_r) \ \ \ \mbox{for all $n\geq N$.}
\end{equation}

In particular, if $Q$ is any character polynomial then $\langle \chi_{V_n},Q\rangle$ is independent of $n\geq \deg P +\deg Q$.
\end{theorem}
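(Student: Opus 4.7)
The plan is to combine the representation stability of Theorem \ref{theorem:FIproperties} with the classical fact that each irreducible character $\chi_{V(\lambda)_n}$ is itself given by a fixed character polynomial. Since $V$ is finitely generated in characteristic zero, representation stability yields a decomposition
$$V_n \;\cong\; \bigoplus_\lambda V(\lambda)_n^{\oplus c_\lambda}$$
in which only finitely many $c_\lambda$ are nonzero and in which both the set of $\lambda$ appearing and the multiplicities $c_\lambda$ are independent of $n$ for $n$ sufficiently large. Consequently $\chi_{V_n} = \sum_\lambda c_\lambda\, \chi_{V(\lambda)_n}$ stably, and it suffices to produce a character polynomial for each $\chi_{V(\lambda)_n}$ of controlled degree.

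For the irreducibles I would invoke a stability property of symmetric-group characters, essentially due to Murnaghan: for each partition $\lambda$ there exists $P_\lambda \in \Q[X_1, X_2, \ldots]$ such that $\chi_{V(\lambda)_n}(\sigma) = P_\lambda(X_1(\sigma), X_2(\sigma), \ldots)$ for all sufficiently large $n$ and all $\sigma\in S_n$, with $\deg P_\lambda$ bounded in terms of $|\lambda|$. Within the FI-framework, this can be obtained by writing $V(\lambda)_n$ as an alternating sum, in the representation ring of $S_n$, of ``induced'' modules $M(W)_n := \Ind_{S_k\times S_{n-k}}^{S_n}(W\boxtimes\mathbf{1})$ with $k$ bounded. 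For such an $M(W)$ the induced character formula expresses $\chi_{M(W)_n}(\sigma)$ as a weighted count of $\sigma$-fixed $k$-subsets of $\{1,\ldots,n\}$, and parametrizing these subsets by the number of $i$-cycles of $\sigma$ used rewrites $\chi_{M(W)_n}(\sigma)$ as an explicit polynomial in $X_1(\sigma),\ldots,X_k(\sigma)$ of degree at most $k$ in the grading $\deg X_i = i$, independent of $n$ for $n\geq 2k$. Summing $\sum_\lambda c_\lambda P_\lambda$ over the finitely many $\lambda$ in the decomposition of $V$ then yields the desired character polynomial $P$ for $\chi_{V_n}$, with $\deg P$ controlled by the weight of $V$.

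The second assertion follows immediately from Proposition 2.2 of \cite{CEF2}, recalled in the paragraph just before the theorem: for character polynomials $P$ and $Q$, the inner product $\langle P_n, Q_n\rangle$ is independent of $n$ as soon as $n \geq \deg P + \deg Q$. Applied to the $P$ produced above and the given $Q$, this yields the asserted $n$-independence of $\langle \chi_{V_n}, Q\rangle$. The main technical obstacle I anticipate is producing the induced-module filtration (or alternating-sum expression) of $V$ with controlled weight; this ultimately rests on the Noetherianity of the category of FI-modules, which is the heart of CEFN. Once that structural input is in hand, the remainder of the argument reduces to the explicit induced-character computation for each $M(W)$ and the bookkeeping of a finite sum.
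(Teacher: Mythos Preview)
The paper does not actually prove this theorem; it is quoted from \cite{CEFN} (and in the characteristic~$0$ form stated here, already from \cite{CEF1}), so there is no in-paper argument to compare against. That said, your sketch is correct and is essentially the original \cite{CEF1} proof: finite generation in characteristic~$0$ gives a stable irreducible decomposition $V_n\cong\bigoplus_\lambda V(\lambda)_n^{\oplus c_\lambda}$ (this is exactly the ``Representation stability'' clause of Theorem~\ref{theorem:FIproperties}), each $\chi_{V(\lambda)_n}$ is given by a fixed character polynomial of degree $|\lambda|$ (via the $M(W)$ induced-module description, or Murnaghan), and the finite sum $\sum_\lambda c_\lambda P_\lambda$ is the desired $P$; the second assertion is then immediate from the stated Proposition~2.2 of \cite{CEF2}.

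It is worth noting that the argument in the cited reference \cite{CEFN} is genuinely different from yours. Your route relies on semisimplicity of $\Q[S_n]$ to decompose $V_n$ into irreducibles, and hence is specific to characteristic~$0$. The \cite{CEFN} proof instead uses the shift functor $S_a$ and the structural theorem that a finitely generated FI-module over a Noetherian ring has $S_aV$ ``$\sharp$-filtered'' for large $a$; this yields polynomiality of characters (indeed of Brauer characters) over arbitrary fields, where no irreducible decomposition is available. Your approach is shorter and more transparent in the characteristic~$0$ setting of the statement as written, while the \cite{CEFN} approach buys the much stronger conclusion over any Noetherian coefficient ring. Either is adequate here; your identification of the ``main technical obstacle'' as producing a bounded-weight induced-module resolution is exactly the point where the two proofs diverge.
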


We note that evaluating \eqref{eq:charpolthm:a} on the identity permutation gives a polynomial $P(T)\in \Q[T]$ so that \[\dim_k V_n = P(n)\]
for all $n\geq N$.

\paragraph{\'Etale Representation Stability}
Given a co-FI-scheme $Z_\bullet$ defined over $\F_q$, its \'etale cohomology $H^i_{\et}(Z_{\bullet/\Fqbar};\Q_\ell)$  has additional structure beyond that of an $FI$-module over $\Q_\ell$.  The geometric Frobenius $\Frob_q$ gives a natural endomorphism of $Z_{\bullet/\Fqbar}$, and this gives rise to an action of $\Gal(\Fqbar/\F_q)$ on the FI-module $H^i_{\et}(Z_{\bullet/\Fqbar};\Q_\ell)$. As noted in the introduction, the  eigenvalues of $\Frob_q$ and the action of $\Gal(\Fqbar/\F_q)$ are crucial parts\footnote{As observed e.g. by Milne \cite{Mi2}, the Tate conjecture implies that the eigenvalues of $\Frob_q$ determine the $\Gal(\Fqbar/\F_q)$-action. But, this is not known at present.} of the data here.  Weaker than knowing an eigenvalue $\lambda$ of $\Frob_q$ on $H^j_{\et}(Z_{n/\Fqbar};\Q_\ell)$ is knowing its weight.  Deligne proved that $\lambda$ is an algebraic number with $|\lambda|=q^{r/2}$ for some $j\leq r\leq 2j$, with $r=j$ if $Z_n$ is smooth and proper.  The number $r$ is the {\em weight} of the eigenvalue $\lambda$.  Similarly, for $Z_\bullet$ defined over a number field $K$, the action of $\Gal(\bar{K}/K)$ on $Z_{\bullet/\bar{K}}$ induces an action on $H^i_{\et}(Z_{\bullet/\bar{K}};\Q_\ell)$, and this action is a fundamental part of the data.

In increasing order of strength, we could ask that for each $i$ there exists $D$ so that for all $n\geq D$:
\begin{enumerate}
\item The isomorphism type of $H^i_{\et}(Z_{n/\bar{K}};\Q_\ell)$ as a $\Q_\ell$-vector space does not depend on $n$;
\item in addition, the list of weights of $\Frob_q$ on $H^i_{\et}(Z_{n/\bar{K}};\Q_\ell)$ does not depend on $n$;
\item in addition, the list of eigenvalues of $\Frob_q$ on $H^i_{\et}(Z_{n/\bar{K}};\Q_\ell)$ does not depend on $n$.
\item The isomorphism type of $H^i_{\et}(Z_{n/\bar{K}};\Q_\ell)$ as a $\Gal(\bar{K}/K)$-representation does not depend on $n$.
\end{enumerate}
We have adopted the strongest of these as our definition of \'etale homological stability.

\subsection{Base change for normally compactifiable $I$-schemes}
\label{subsection:basechange}
Fix any $N$ and distinct primes $p$ and $\ell$ with $p\nmid N$.
For a scheme $X$ smooth over $\Z[1/N]$, the usual way to compare the singular cohomology $H^\ast_{sing}(X(\C);\Q)$ to the \'{e}tale cohomology
$H^\ast_{et}(X_{/\Fpbar};\Q_\ell)$ is via base change theorems.   Given a sequence of such schemes $\{X_n\}$, nne might try to prove that the vector spaces $H^i_{\et}(X_{n/\Fqbar};\Q_\ell)$ stabilize by using topology to prove homological stability of singular cohomology $H^i(X_n(\C);\Q_l)$, and then quoting the usual comparison and base change theorems that compare these two vector spaces.  However, if the $X_n$ are not projective then without more work, one has to exclude finitely many primes for each $n$ and each $i$. This list of ``bad primes'' may grow uncontrollably with $n$, and so even for a fixed $i$ it might be necessary to throw out all primes, giving no information.

Therefore we need something stronger.  As has been known for a long time, to remedy this one tries to find a smooth compactification of $X$ with normal crossings divisor.

\begin{definition}[{\bf Normally compactifiable scheme}]  A scheme $X$, smooth over $\Z[1/N]$, is \emph{normally compactifiable at $p\nmid N$} if there exists an open embedding $X\to \bar{X}$ with $\bar{X}$ smooth and projective over $\Z[1/N]$, and $\bar{X}-X$ a normal crossings divisor with good reduction at $p$, i.e. $(\bar{X}-X)|_{\F_p}$ is a union of smooth, projective varieties over $\F_p$, with all intersections transverse.
\end{definition}

We will need the notion of a sequence of schemes that can be compactified in a uniform way.

\begin{definition}[{\bf Normally compactifiable $I$-scheme}]
    Let $I$ be a category. A \emph{smooth (projective) $I$-scheme} is a functor $X\colon I\to \Schemes$ consisting of schemes, smooth (and projective) over $\Z[1/N]$.  A smooth $I$-scheme is \emph{normally compactifiable at $p\nmid N$} if there exists a smooth projective $I$-scheme $\bar{X}\colon I\to \Schemes$, and a natural transformation $X\to\bar{X}$ such that, for all $i\in I$, $X_i\to\bar{X}_i$ is an open embedding and $\bar{X}_i-X_i$ is a normal crossings divisor with good reduction at $p$.
\end{definition}

With this definition in hand we can state the appropriate base change theorem for $I$-schemes. We will apply this later in the paper to the category $FI^{\op}$.

\begin{theorem}[{\bf Base change for $I$-schemes}]
\label{thm:basechange}
    Let $X\colon I\to\Schemes$ be a smooth $I$-scheme which is normally compactifiable at $p\nmid N$. Then, for all morphisms $i\to j$ in $I$, there exists a commuting square of ring homomorphisms
	\begin{equation*}
        \xymatrix{
            H^\ast_{et}(X_{j/\Fpbar};\Z_\ell) \ar[rr]^\cong \ar[d] && H^\ast_{sing}(X_j(\C);\Z_\ell) \ar[d]\\
            H^\ast_{et}(X_{i/\Fpbar};\Z_\ell) \ar[rr]^\cong && H^\ast_{sing}(X_i(\C);\Z_\ell)}
    \end{equation*}
    with horizontal maps isomorphisms.
\end{theorem}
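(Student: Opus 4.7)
The plan is to reduce the comparison isomorphism for each non-proper $X_i$ to the (well understood) smooth and proper case applied to the strata of its compactification, by means of a natural Gysin/weight spectral sequence. Functoriality in $I$ will be free once we check that the spectral sequence and the comparisons on its $E_1$-page are constructed from operations that are visibly natural in the pair $(\bar X, D)$.

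First I would unwind the hypothesis. For each $i$, write $D_i := \bar X_i \setminus X_i$ and let $D_i^{(1)}, \ldots, D_i^{(r_i)}$ be its smooth irreducible components. The good-reduction clause of normal compactifiability says that every $k$-fold intersection $D_i^{(\alpha_1)} \cap \cdots \cap D_i^{(\alpha_k)}$ is smooth and projective over $\Z[1/N]$, so that the disjoint union $\widetilde{D}_i^{(k)}$ of all such $k$-fold intersections (with $\widetilde{D}_i^{(0)} = \bar X_i$) is a smooth projective scheme over $\Z[1/N]$. For every such scheme $Z$, smooth and proper base change shows that $R\pi_\ast \Z_\ell$ is locally constant constructible on $\Spec \Z[1/N]$, and combined with Artin's comparison theorem at the archimedean place this gives a natural comparison isomorphism $H^\ast_{et}(Z_{/\Fpbar}; \Z_\ell) \xrightarrow{\cong} H^\ast_{sing}(Z(\C); \Z_\ell)$ that is functorial in morphisms of smooth projective $\Z[1/N]$-schemes.

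Second, I would invoke the Gysin (weight) spectral sequence associated to the simple normal crossings compactification $X_i \hookrightarrow \bar X_i$:
\[
E_1^{p,q} = H^{q-2p}(\widetilde{D}_i^{(p)})(-p) \Longrightarrow H^{p+q}(X_i),
\]
which exists both in \'etale cohomology over $\Fpbar$ and in singular cohomology over $\C$, with a comparison map between them induced by the usual $\Z_\ell$-comparison. By the previous step the comparison map is an isomorphism on every $E_1$ term; since the spectral sequence is bounded (it degenerates at $E_2$ after $\otimes \Q$, but we do not even need this — only boundedness), the comparison is an isomorphism on the abutment. This establishes the horizontal isomorphisms in the square.

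Finally, naturality. A morphism $i \to j$ in $I$ is sent by the natural transformation $X \to \bar X$ to a morphism of pairs $(X_i, \bar X_i, D_i) \to (X_j, \bar X_j, D_j)$, and in particular to maps $\widetilde{D}_i^{(k)} \to \widetilde{D}_j^{(k)}$ of smooth projective $\Z[1/N]$-schemes for every $k$. Since the Gysin spectral sequence is functorial for such morphisms of normally compactified pairs, and the comparison at each stratum is natural by step one, the induced square on abutments commutes. The main technical obstacle is the construction (with integral coefficients) of the Gysin spectral sequence in a way that is manifestly compatible with the comparison map and manifestly functorial in maps of pairs; this is carried out using Deligne's theory of the weight filtration for a simple normal crossings divisor (\cite{De1}, SGA 4½), and the good-reduction hypothesis is precisely what ensures the whole construction descends to the mod-$p$ fiber.
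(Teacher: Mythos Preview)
Your approach is genuinely different from the paper's: the paper never builds a Gysin/weight spectral sequence. Instead it works directly with the cospecialization map from $H^\ast_{\et}(X_{i/\Fpbar};\Z_\ell)$ to the \'etale cohomology of the geometric generic fiber (over the strict henselization of $\Spec\Z[1/N]$ at $p$), citing \cite[Proposition~7.7]{EVW} for the fact that this map is an isomorphism under the normal-crossings-with-good-reduction hypothesis, and then composes with Artin's comparison theorem. Both of these maps are manifestly natural in the scheme, so compatibility with the $I$-structure comes for free and the proof is essentially two lines.

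Your argument has a real gap in the naturality step. You assert that a morphism $i\to j$ in $I$ yields maps $\widetilde{D}_i^{(k)}\to\widetilde{D}_j^{(k)}$ on the depth-$k$ strata, but this is false in general: a map of pairs $f\colon(\bar X_i,D_i)\to(\bar X_j,D_j)$ only guarantees $f^{-1}(D_j)\subset D_i$, and $f$ need not carry $D_i$ into $D_j$ at all, let alone preserve depth. (Take $X_i=\Pb^1\setminus\{0,1,\infty\}\hookrightarrow X_j=\Pb^1\setminus\{0,\infty\}$, both compactified as $\Pb^1$: the component $\{1\}$ of $D_i$ lands in $X_j$, not in $D_j$, so there is no map $\widetilde{D}_i^{(1)}\to\widetilde{D}_j^{(1)}$.) The weight spectral sequence \emph{is} contravariantly functorial for such maps of pairs --- construct it as the Leray spectral sequence for $Rj_\ast\Z_\ell$ and use naturality of the base-change morphism $f^\ast R(j_j)_\ast\to R(j_i)_\ast f^\ast$ --- but the induced map on the $E_1$-page is not simply a pullback along a map of strata, and checking its compatibility with the stratum-by-stratum smooth-proper comparison isomorphisms is additional work you have not done. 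Your strategy can be salvaged along these lines, but the paper's specialization-map argument sidesteps the issue entirely.
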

\begin{proof}
    This theorem is essentially an $I$-scheme version of \cite[Proposition 7.7]{EVW}. Suppose that we are given $X\colon I\to \Schemes$ as above.  We can restrict the diagram $X$ to an \'etale neighborhood of $p$ in $\Spec(\Z[1/N])$, or even to the strict henselization of the local ring at $p$. The horizontal homomorphisms in the statement of the theorem are now the specialization morphisms from the cohomology of the geometric special fibers of $X$ at $p$ to the geometric generic fiber of $X$ at $\Q$. As explained in the proof of \cite[Proposition 7.7]{EVW}, for each $i\in I$, the Poincar\`e dual of the specialization map
    \begin{equation*}
        H^\ast_{et}(X_{i/\Fpbar};\Z_\ell)\to H^\ast_{et}(X_{i/\C};\Z_\ell)
    \end{equation*}
    is an isomorphism because $X_i$ admits a smooth compactification $X_i\into \bar{X_i}$ with $\bar{X_i}-X_i$ a normal crossings divisor with good reduction at $p$. By construction, this map is natural (cf. e.g. \cite[Ch. III, \S 3]{FK}), so for all $i\to j$ in $I$, there is a commuting square
    \begin{equation*}
        \xymatrix{
            H^\ast_{et}(X_{j/\Fpbar};\Z_\ell) \ar[rr]^\cong \ar[d] && H^\ast_{et}(X_{j/\C};\Z_\ell) \ar[d] \\
            H^\ast_{et}(X_{i/\Fpbar};\Z_\ell) \ar[rr]^\cong && H^\ast_{et}(X_{i/\C};\Z_\ell)
        }
    \end{equation*}
    To obtain the desired square, we compose this with the commuting square
    \begin{equation*}
        \xymatrix{
            H^\ast_{et}(X_{j/\C};\Z_\ell) \ar[rr]^\cong \ar[d] && H^\ast_{sing}(X_j(\C);\Z_\ell) \ar[d] \\
            H^\ast_{et}(X_{i/\C};\Z_\ell) \ar[rr]^\cong && H^\ast_{sing}(X_i(\C);\Z_\ell)
        }
    \end{equation*}
    given by Artin's Comparison theorem \cite[Theorem 21.1]{Mi}.
\end{proof}

\subsection{\'{E}tale representation stability}
\label{subsection:repstab}
Let $Z$ be a co-FI scheme smooth over $\Z[1/N]$ for some fixed $N$, with geometrically connected fibers. Let  $p\nmid N$ be prime, and let $\ell\neq p$ be a prime. For each $i\geq 0$, the \'{e}tale cohomology $H^i_{et}(Z_{/\Fpbar};\Zb_\ell)$ is an FI-module.  In addition, for each $q=p^d$, the Frobenius $\Frob_q$ acts on each
$H^i_{et}({Z_n}_{/\Fpbar};\Zb_\ell)$, endowing it with the structure of a $\Gal(\Fpbar/\F_q)$-module.  The $S_n$-action on  $H^i_{et}({Z_n}_{/\Fpbar};\Zb_\ell)$ coming from its structure as an FI-module commutes with the action of $\Gal(\Fpbar/\F_q)$, as do all automorphisms of $Z_n$. Similarly, for any number field $K$, the action of $\Gal(\Kbar/K)$ on $H^i_{\et}(Z_{/\Kbar};\Q_\ell)$ commutes with the $FI$-structure.

This discussion shows that, for $K$ a number field or a finite field of characteristic prime to $N$, $H^i_{et}(Z_{/\Kbar};\Q_\ell)$ is an {\em $\Gal(\Kbar/K)$-FI-module}; that is, an FI-module equipped with an action of $\Gal(\Kbar/K)$ by FI-automorphisms.  We have the corresponding notions of finitely generated $\Gal(\Kbar/K)$-FI-module: there is a finite set $S\subset \coprod_n H^i_{et}(Z_n\ _{/\Kbar};\Q_\ell)$ so that no proper sub-FI-module of $H^i_{et}(Z_{/\overline{K}};\Q_\ell)$ contains $S$.

\begin{definition}[{\bf \'{E}tale representation stability}]
    We say that a sequence $Z_n$ of $\Gal(\Kbar/K)$-modules satisfies {\em \'{e}tale representation stability} if $\{1,\ldots ,n\}\mapsto Z_n$ is a finitely generated $\Gal(\Kbar/K)$-FI-module.
\end{definition}

Theorem C in \cite{CEFN} gives an inductive description of finitely-generated FI-modules $V$ over any Noetherian ring $R$.  Namely, there exists $D\geq 0$ such that for all $n\in \Nb$, there is a natural isomorphism
\begin{equation}
\label{eq:colim2}
V_n\cong\ \colim_S V_S
\end{equation}
where the colimit is taken over the poset of all subsets $S\subset \{1,\ldots ,n\}$ such that
$|S|\leq D$.  If $V$ is a finitely-generated $\Gal(\Kbar/K)$-FI-module then \eqref{eq:colim2} gives is an isomorphism of $\Gal(\Kbar/K)$-modules. Thus \eqref{eq:colim2} gives, for each $n\geq D$, a recipe for building the $\Gal(\Kbar/K)$-representation $V_n$ from a fixed finite collection of $\Gal(\Kbar/K)$-representations.

\paragraph{\'{E}tale representation stability for products and configuration spaces. }
For any scheme $X$ and any finite set $S$, let $X^S$ denote the scheme of functions $S\to X$, i.e. the scheme whose $R$ points are maps of maps of sets $S\to \Hom(\Spec(R),X)$.  Any injection $f:T\to S$ induces a projection $f^*:X^S\to X^T$ by restriction. The functor $S\mapsto X^S$ is thus a co-FI scheme, which we denote by $X^\bullet$. Attached to $X$ we also have the associated {\em configuration space} $\PConf_n(X)$ of {ordered $n$-tuples} in $X$, defined by:

\[\PConf_n(X):=\{(x_1,\ldots ,x_n)\in X^n: x_i\neq x_j \ \forall i\neq j\}=X^n\setminus\Delta \]
where $\Delta$ is the fat diagonal and where we write $x\in X$ to denote an arbitrary $R$-point of $X$.   The group $S_n$ acts freely on $\PConf_n$ by permuting the coordinates. The quotient $\UConf_n(X):=\PConf_n(X)/S_n$ is the {\em configuration space of unordered $n$-tuples} of points in $X$.

We can identify $\PConf_n(X)$ with the subspace $\Emb(S,X)$ of $X^S$ consisting of embeddings.  If $f:S\to T$ is an injection of finite sets and $\phi:T\to X$ is an embedding, then $\phi\circ f:S\to X$ is an embedding.  Thus $S\mapsto \PConf_{|S|}(X)$ is a co-FI space, denoted $\PConf_\bullet(X)$.  It follows that $H^i(\PConf_\bullet(X);\Q_\ell)$ is an FI-module for any $i\geq 0$.

With this setup, we can now prove Theorem A from the introduction.  The proof also gives the following result.

\begin{theorem}
\label{theorem:extra}
Theorem A with $\PConf_n(Y)$ (resp.\ $\UConf_n(Y)$) replaced by $Y^n$ (resp.\ $\Sym^n(Y)$) holds.
\end{theorem}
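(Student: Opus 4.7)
The plan is to deduce both Theorem A and Theorem \ref{theorem:extra} by pulling the problem back, via base change, to the topological category, where the representation stability of configuration spaces is already known through the work of Church--Ellenberg--Farb, and then translating back through the fact that the Galois action commutes with the FI-structure.

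First, I would exhibit $\PConf_\bullet(Y)$ and $Y^\bullet$ as normally compactifiable co-FI-schemes, so that Theorem \ref{thm:basechange} applies with $I = FI^{\op}$. For $Y^\bullet$, with $Y$ itself smooth and projective over $\Z[1/N]$, the scheme $Y^n$ is already smooth and projective; the compactification is trivial and the normal crossings divisor is empty, and functoriality in $FI^{\op}$ is the obvious ``forget coordinates'' map. For $\PConf_\bullet(Y)$ the key input is the Fulton--MacPherson compactification: one takes $\bar X_n = Y[n]$, a smooth projective scheme over $\Z[1/N]$ containing $\PConf_n(Y)$ as the complement of a normal crossings divisor, and uses the forgetful projections $Y[n+1]\to Y[n]$ (and their multi-index analogues) to give $Y[\bullet]$ the structure of a smooth projective co-FI-scheme refining $\PConf_\bullet(Y)$. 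A finite set of bad primes must be excluded to guarantee good reduction of the boundary strata, which can be absorbed into $N$.

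Applying Theorem \ref{thm:basechange} with $I = FI^{\op}$ then yields, for every $i$, an isomorphism of FI-modules
\begin{equation*}
H^i_{\et}(\PConf_\bullet(Y)_{/\Fpbar};\Z_\ell) \cong H^i_{sing}(\PConf_\bullet(Y)(\C);\Z_\ell),
\end{equation*}
and similarly for $Y^\bullet$. By the main theorem of Church--Ellenberg--Farb (with the sharper range of Church for $\dim Y = 1$), the right-hand side is a finitely generated FI-module over $\Z_\ell$ with stable range $2i$ (resp. $4i$), and for $Y^\bullet$ the K\"unneth decomposition shows the same with stable range $i$. Because the Galois action on the left-hand side commutes with the FI-structure, the étale cohomology is in fact a finitely generated $\Gal(\Kbar/K)$-FI-module, which is exactly our definition of étale representation stability. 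The number field case is handled by the same argument, reducing the situation to a prime $p \nmid N$ via compatibility of the Galois action under specialization.

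From this, Item~(1) of Theorem A follows immediately from the inductive description part of Theorem \ref{theorem:FIproperties}, since the colimit formula holds at the level of FI-modules and the Galois action is automatically compatible. Item~(2) comes from Lemma \ref{lemma:stableisotypics}, applied to the finitely generated $\Gal(\Kbar/K)$-FI-module $H^i_{\et}(\PConf_\bullet(Y)_{/\Kbar};\Q_\ell)$ and the finitely generated FI-module $V(\lambda)_\bullet$, which gives a Galois-equivariant identification of $V_n \otimes_{\Q[S_n]} V(\lambda)_n$ independent of $n$ for $n$ large. Item~(3) follows from Theorem \ref{theorem:FIproperties2}, together with the observation that the generating degree of the FI-module, and hence the degree of the character polynomial, is bounded by the stable range. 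The main obstacle to overcome is the verification that the Fulton--MacPherson construction genuinely gives a normally compactifiable $FI^{\op}$-scheme over $\Z[1/N]$ for some uniform $N$ --- that is, that the compactifications, their boundary strata, and the transition maps all exist and remain well-behaved integrally, so that a single base-change statement handles the entire sequence at once.
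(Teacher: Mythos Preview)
Your proposal is correct and follows essentially the same route as the paper: establish normal compactifiability of the co-FI-schemes $Y^\bullet$ (trivially from that of $Y$) and $\PConf_\bullet(Y)$ (via Fulton--MacPherson), invoke the base-change theorem for $I$-schemes to identify the \'etale FI-module with the singular one, apply the finite-generation results of \cite{CEF1}, and then read off Items~(1)--(3) from Theorems~\ref{theorem:FIproperties} and~\ref{theorem:FIproperties2}. One small point: for the number-field case the paper goes directly through Artin comparison rather than specializing to a prime, which is cleaner since specialization only sees the decomposition subgroup rather than the full $\Gal(\Kbar/K)$; and for $Y^\bullet$ you should allow $Y$ merely normally compactifiable (taking $\bar Y^n$ as the compactification), not just projective.
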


\begin{proof}[Proof of Theorem A and Theorem~\ref{theorem:extra}]
Since $X$ is normally compactifiable, $H^*(X(\C);\C)$ is finite-dimensional. Since $X$ has geometrically connected fibers, $X(\C)$ is connected. Since $X(\C)$ is a complex manifold, it is also orientable and, assuming that $X$ is not a point (for otherwise the theorem is trivial), $X(\C)$ has real dimension at least $2$.  Thus $X$ satisfies the hypotheses of Theorem 6.2.1 of \cite{CEF1}, which gives that the FI-module $H^i(\PConf_\bullet(X(\C));\C)$ is finitely generated for each $i\geq 0$.  For the claimed
stable ranges and degree of character polynomial, see Theorem 6.3.1 of \cite{CEF1}.  Proposition 6.1.2 of \cite{CEF1} gives the same conclusion for the FI-module $H^i(X(\C)^\bullet;\Q)$; see Theorem 4.1.7 and Remark 6.1.3 of \cite{CEF1} for the stable range and degree of the character polynomial.  Note here that this improved stable range comes from the fact that $X^\bullet$ is a so-called FI\#-module.

By Artin's Comparison theorem, for $Z_\bullet$ equal to $X^\bullet$ or $\PConf_\bullet(X)$ :
\begin{equation*}
    H^i_{\et}(Z_{\bullet/\bar{\Q}};\Q_\ell)\otimes_{\Q_\ell}\C \cong H^i(Z_{\bullet}(\C);\C)
\end{equation*}
as FI-modules, and thus $H^i_{\et}(Z_{\bullet/\bar{\Q}};\Q_\ell)$ is finitely generated.  Moreover, because, $Z_\bullet$ is a co-FI-scheme over $\Zb[1/N]$, for each number field $K$  the group $\Gal(\Kbar/K)$ acts on the base-change $Z_{\bullet/\bar{\Q}}$ and this action commutes with the co-FI-structure.  We conclude, for each $i$, that $H^i_{\et}(Z_{\bullet/\bar{\Q}};\Q_\ell)$ is a finitely generated co-$\Gal(\Kbar/K)$-FI-module, as claimed.

Now let $K=\F_q$ for $q=p^d$ with $p\nmid N$. The scheme $X^n$ is normally compactifiable since $X$ is, and thus $X^\bullet$ is a normally compactifiable, smooth co-FI scheme.  Results of Fulton and MacPherson \cite{FM} imply that $\PConf_\bullet(X)$ is also a normally compactifiable, smooth co-FI scheme.  Indeed, in \emph{loc. cit.} they construct a co-$FI$-scheme $J\mapsto X[J]$ (cf. \cite[p. 184 (4)]{FM}), equipped with a natural open embedding $\PConf_J(X)\to X[J]$, and such that the complement $X[J]-\PConf_J(X)$ is a normal crossings divisor \cite[Theorem 3]{FM}, with good reduction at $p$ for any prime $p\nmid N$ (cf. \cite[bottom of p. 188]{FM}). Base change for FI-schemes (Theorem~\ref{thm:basechange}) implies that $H_{\et}^i(X^\bullet _{/\Fqbar};\Q_\ell)$ and $H_{\et}^i({\PConf_\bullet (X)}_{/\Fqbar};\Q_\ell)$ are finitely-generated
$\Gal(\Fqbar/\F_q)$-FI-modules.  Now apply
Theorem~\ref{theorem:FIproperties}.
\end{proof}

\section{Convergent Cohomology}
\label{section:convergent:cohomology}
In this section we provide the necessary bounds for the ``representation unstable cohomology'' of $X^n$ and of $\PConf_n(X)$ that will be necessary for the arithmetic applications in Section~\ref{section:arithmetic:stats}.  This comprises one of the main technical contributions of this paper.

\subsection{Definition of convergent cohomology}

A function $F:\Nb\to\Nb$ {\em has exponential growth rate $\lambda$} if
\begin{equation}
\label{eq:growth}
\lim_{n\to\infty}\frac{\displaystyle \log f(n)}{\displaystyle n}=\lambda.
\end{equation}
If \eqref{eq:growth} holds with $\lambda=0$, we say that $F$ has {\em sub-exponential growth}.

Let $Z$ be a co-FI-scheme over $\Z[1/N]$.  For each $i\geq 0$, let $H^i(Z_n)$ denote
either the singular cohomology $H^i(Z_n(\C);\Q)$ or the \'{e}tale cohomology $H^i_{\et}(Z_{n/\Kbar};\Q_\ell)$ for $K$ a number field or finite field of characteristic prime to $N$.    In each case $H^i(Z)$ is an FI-module (over $\Q$ and $\Q_\ell$, respectively).  For any class function $P$ on $S_n$, denote by $\langle P,H^i(Z_n)\rangle$ the inner product of (the character of) $H^i(Z_n)$ with $P$.

In order to compute arithmetic statistics for a co-FI scheme $Z$, one needs to control the ``representation unstable'' cohomology of $Z$; see \S\ref{section:arithmetic:stats}.
More precisely, one needs to prove one of the following two properties, which were shown to be equivalent in \cite[\S 3]{CEF2}:

\begin{enumerate}
\item For each $0\leq a\leq n$ there is a function $F_a(i)$, subexponential in $i$ and not depending on $n$, so that:
\begin{equation}
\label{eq:conv5}
\dim H^i(Z_n)^{S_{n-a}}\leq F_a(i) \text{ for all }n\text{ and }i.
\end{equation}

\item  For each character polynomial $P\in \Q[X_1,X_2,\ldots]$ there exists a function $F_P(i)$, subexponential in $i$ and not depending on $n$, such that:
\begin{equation}
\label{eq:conv6}
\big|\langle P,H^i(Z_n)\rangle\big|\leq F_P(i) \text{ for all }n\text{ and }i.
\end{equation}
\end{enumerate}

It is crucial that these bounds hold independently of $n$.  While the second condition is the one that applies to arithmetic statistics (see \S\ref{section:arithmetic:stats} below), it is quite difficult to check. Thus the equivalence with the first condition is quite useful.

\begin{definition}[{\bf Convergent cohomology}]
\label{def:convergent}
We say that the co-FI scheme $Z$ has {\em convergent (singular or \'{e}tale) cohomology} if either of the two equivalent properties 1 or 2 in equations \eqref{eq:conv5} or \eqref{eq:conv6}
holds.  If these properties hold with $F_P(i)$ having exponential growth $0<\lambda<\infty$, we say that $Z$ has {\em weakly convergent cohomology with convergence rate $\lambda$}.
\end{definition}

These kinds of bounds are typically not easy to prove.   In \cite{EVW} this is accomplished \footnote{\cite{EVW} only needs to deal with the classical, not representation stable, case; that is, the $a=0$ case.} for the cohomology of certain Hurwitz spaces by obtaining an exponential upper bound for the number of $i$-cells, via an explicit cell decomposition. In \cite{CEF2} such bounds for the example $H^i(\PConf_n(\C);\Q)$ are obtained by a detailed knowledge of these $S_n$-representations. The rest of this section is devoted to giving such bounds for two natural classes of co-FI schemes.  We then apply this in \S\ref{section:arithmetic:stats} to arithmetic statistics for $\F_q$-points on these schemes.

\subsection{Subexponential bounds on Betti numbers of symmetric products}
Let $X$ be a topological space.  The $n$-fold cartesian product $X^n$ is endowed with a natural action of the symmetric group $S_n$, given by permuting the factors.  The quotient $\Sym^nX:=X^n/S_n$ is called the the {\em $n^{\rm th}$ symmetric product} of $X$.   Set $b_i(X)=\dim H^i(X;\Q)=\dim H_i(X;\Q)$, the equality following from the universal coefficient theorem.
\begin{theorem}[{\bf Growth of Betti numbers of symmetric products}]
\label{thm:symbound}
    Let $X$ be a connected space such that $\dim H^\ast(X;\Qb)<\infty$.   Then there exist constants $K,L>0$ so that for each $i\geq 0$, and for all $n\geq 1$:
    \begin{equation*}
      b_i(\Sym^n(X);\Q)\leq Ke^{L\sqrt{i}}.
    \end{equation*}
\end{theorem}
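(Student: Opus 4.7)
The plan is to reduce the estimate to extracting a coefficient from Macdonald's classical generating function for the Poincar\'{e} polynomials of symmetric products, and then to exploit non-negativity of coefficients to obtain an upper bound uniform in $n$.

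First, I would invoke Macdonald's two-variable identity, valid for any space of finite rational type:
\[
F(t,z) \;:=\; \sum_{n\geq 0} P(\Sym^n X, t)\, z^n \;=\; \prod_{j \text{ odd}} (1 + z t^j)^{b_j(X)} \prod_{j \text{ even}} (1 - z t^j)^{-b_j(X)},
\]
where $P(Y,t) = \sum_i b_i(Y)\, t^i$ denotes the Poincar\'{e} polynomial. Since $X$ is connected, $b_0(X) = 1$, so the right-hand side contains a distinguished factor of $(1-z)^{-1}$; I would factor it out to write $F(t,z) = G(t,z)/(1-z)$, where $G$ is the analogous product restricted to indices $j \geq 1$.

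Next I would extract coefficients. Every factor of $G$ is an ordinary or binomial series of the shape $\sum_k c_k (zt^j)^k$ with $c_k \geq 0$, so all Taylor coefficients of $G(t,z)$ are non-negative, and for fixed $i$ only finitely many $m$ can contribute to $[t^i z^m]G$ (since each unit of $z$-degree is paired with at least one unit of $t$-degree). Hence
\[
b_i(\Sym^n X) \;=\; [t^i z^n] F(t,z) \;=\; \sum_{m=0}^n [t^i z^m]\, G(t,z) \;\leq\; [t^i]\, G(t,1),
\]
and the right-hand side is a well-defined finite sum, independent of $n$.

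Finally, since $\dim H^*(X;\Q) < \infty$, the product defining $G(t,1)$ is a \emph{finite} product of $(1\pm t^j)^{\pm b_j(X)}$ factors, hence a rational function of $t$ with singularities only at roots of unity. A standard partial-fractions argument then gives $[t^i]G(t,1) = O(i^{C})$ for some constant $C$ depending only on $X$ (essentially the pole order of $G(t,1)$ at $t=1$, namely $\dim H^{\text{even}}(X;\Q) - 1$). Polynomial growth in $i$ is certainly dominated by $K e^{L\sqrt{i}}$ for suitable constants, which gives the claimed estimate (in fact, considerably more).

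The step I expect to require the most care is the second: verifying that the truncated sum $\sum_{m\leq n} [t^i z^m] G$ can be enlarged to the full $[t^i]G(t,1)$ without sacrificing uniformity in $n$. This is exactly where connectedness of $X$---which isolates and removes the $(1-z)^{-1}$ factor---is essential, since without it the enlargement would introduce a divergent contribution.
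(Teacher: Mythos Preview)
Your proof is correct and in fact yields a sharper bound than the one stated. Both your argument and the paper's start from Macdonald's two-variable generating function and end up bounding $b_i(\Sym^n X)$ by the $i$-th Taylor coefficient of the same rational function
\[
\prod_{\ell\geq 1}(1+t^{2\ell-1})^{b_{2\ell-1}(X)}\,(1-t^{2\ell})^{-b_{2\ell}(X)}.
\]
The routes to this function differ: the paper first observes that $b_i(\Sym^n X)$ is nondecreasing in $n$ and stabilizes at $n=i$, then extracts the diagonal $\sum_i b_i(\Sym^i X)z^i$ via a contour integral and the residue theorem; you instead isolate the $(1-z)^{-1}$ factor coming from $b_0=1$ and use non-negativity of the remaining coefficients to pass directly to $z=1$. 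Your path is more elementary and avoids the residue calculus entirely. The endgames also differ: the paper dominates the rational function by $(1-t)^{-D}$ and then bounds the resulting coefficient by a count of ordered partitions, invoking the Hardy--Ramanujan asymptotic to obtain $O(e^{L\sqrt{i}})$; your partial-fractions observation gives the stronger polynomial bound $O(i^C)$ outright, with $C$ one less than the pole order at $t=1$. Either suffices for the downstream applications, which only require subexponential growth.
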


\begin{proof} We prove the theorem in a series of steps.

\bigskip
\noindent
{\bf Step 1 (Bounding the unstable range): } If $m<n$ then $b_i(\Sym^m(X))\le b_i(\Sym^n(X))$; further, $b_i(\Sym^n(X))=b_n(\Sym^n(X))$ for all $i\geq n$.

    To see the statement for $m<n$, observe that for any graded vector space $V$ (over a field of characteristic 0), a choice of ``unit'' $1\in V_0$, determines an injection
    \begin{align*}
        \Sym^m(V)_i&\to\Sym^n(V)_i\\
        \vec{v}&\mapsto\vec{v}\otimes 1\otimes\cdots\otimes 1
    \end{align*}
    In particular, $\dim(\Sym^m(V)_i)\le \dim(\Sym^n(V)_i)$ for all $m<n$ and $i$. Considering $V=H^\ast(X;\Qb)$, K\"unneth and transfer imply that
    \begin{equation*}
        \Sym^n(H^\ast(X;\Qb))_i\cong H^i(\Sym^n(X);\Qb)
    \end{equation*}
    and the first part of the claim follows.  For the second, we note that for a graded vector space $V=V_0\oplus\cdots\oplus V_m$, with $V_0=\Qb$, we have
    \begin{equation*}
        \Sym^n(V)\cong\bigoplus_{a_0+\cdots a_m=n}\bigotimes_{j=1}^m \Sym^{a_j}(V_j)
    \end{equation*}
    and thus
    \begin{equation*}
        \Sym^n(V)_i\cong \bigoplus_{(a_1,\ldots,a_m)}\bigotimes_{j=1}^m\Sym^{a_j}(V_j)
    \end{equation*}
    where the direct sum is over partitions $a_1+2a_2+\cdots ma_m=i$ such that $a_1+\cdots+a_m = n$. In particular, the number of pieces in the partition is at most $n$, and since for any $i$, the largest number of pieces in any partition is $i$, we see that for $n\ge i$, the direct sum is independent of $n$. We conclude the claim by taking $V=H^\ast(X;\Qb)$ and invoking K\"unneth and transfer as above.

\medskip
\noindent
{\bf Step 2 (Generating functions and the residue formula): }
Let $m$ be the highest degree for which $H^m(X;\Qb)\neq 0$, let $C=\max_i b_i(X)$, and let $D=C\cdot m$.
    Write $B_i:=b_i(X)$. Macdonald \cite{Mac} showed that $b_i(\Sym^n(X))$ is the coefficient of $x^it^n$ in the power series expansion of the rational function
    \begin{equation*}
        F_X(x,t)=\frac{(1+xt)^{B_1}(1+x^3t)^{B_3}(1+x^5t)^{B_5}\cdots}{(1-t)(1-x^2t)^{B_2}(1-x^4t)^{B_4}\cdots}
    \end{equation*}
    Step 1 implies that $b_i(\Sym^n(X))\le b_i(\Sym^i(X))$ for all $n$, and thus a bound is given by the coefficient of $x^it^i$ in the power series above, i.e. it suffices to consider the generating function
    \begin{equation*}
        \diag(F_X)(z)=\sum_{i=0}^\infty b_i(\Sym^i(X))z^i.
    \end{equation*}
    Because this power series has no poles for $x,t<1$, \cite[Theorem 1]{HK} implies that for all $z$ with $|z|<1$, we have
    \begin{equation*}
        \diag(F_X)(z)=\frac{1}{2\pi i}\int_{|s|=1} \frac{(1+z)^{B_1}(1+s^2z)^{B_3}(1+s^4z)^{B_5}\cdots}{(s-z)(1-sz)^{B_2}(1-s^3z)^{B_4}\cdots}ds
    \end{equation*}
    Note that, because $|z|<1$, the integrand has a single pole inside the unit circle $|s|=1$ at $s=z$. Therefore, by the Residue Formula:
    \begin{align*}
        \diag(F_X)(z)&=\res_{s=z} \frac{1}{s-z}\left(\frac{(1+z)^{B_1}(1+s^2z)^{B_3}\cdots}{(1-sz)^{B_2}(1-s^3z)^{B_4}\cdots}\right)\\
        &=\frac{(1+z)^{B_1}(1+z^3)^{B_3}\cdots}{(1-z^2)^{B_2}(1-z^4)^{B_4}\cdots}
    \end{align*}
    Therefore, our generating function is given by
    \begin{equation}\label{gen}
    \diag(F_X)(z)=    \prod_{\ell=1}^{\lceil\frac{m}{2}\rceil}(1+z^{2\ell-1})^{B_{2\ell-1}}\left(\sum_{k=0}^\infty z^{2\ell k}\right)^{B_{2\ell}}.
    \end{equation}

    \medskip
    \noindent
    {\bf Step 3 (Asymptotics of the coefficients): }
    We claim that the coefficient of $z^i$ in the expansion of \eqref{gen} is less than or equal to the coefficient of $z^i$ in
    \begin{equation}\label{genbound}
        \left(\sum_{k=0}^\infty z^k\right)^D.
    \end{equation}
    To see this, let $f(z)=\sum_{i=0}^\infty f_i z^i$ and $g(z)=\sum_{i=0}^\infty g_iz^i$ be power series with non-negative coefficients.  Let us say that $f(z)\le g(z)$ if, for all $i\ge 0$, $f_i\le g_i$. Then for any other power series $h(z)$ with non-negative integer coefficients, it is easy to see that
    \begin{equation*}
        f(z)h(z)\le g(z)h(z).
    \end{equation*}
    Taking $f(z)=1$ and $h(z)=g(z)$, this implies that for all $A<B$,
    \begin{equation*}
        g(z)^A\le g(z)^{B}.
    \end{equation*}
    Applying this to \eqref{gen}, we have
    \begin{align*}
         \prod_{\ell=1}^{\lceil\frac{m}{2}\rceil}(1+z^{2\ell-1})^{B_{2\ell-1}}\left(\sum_{k=0}^\infty z^{2\ell k}\right)^{B_{2\ell}}&\le \prod_{\ell=1}^{\lceil\frac{m}{2}\rceil}(1+z^{2\ell-1})^C\left(\sum_{k=0}^\infty z^{2\ell k}\right)^{C}\\
         &\le \prod_{\ell=1}^{\lceil\frac{m}{2}\rceil}\left(\sum_{k=0}^\infty z^k\right)^{2C}\\
         &=\eqref{genbound}.
    \end{align*}
    We now conclude the proof of the theorem as follows. Given a partition $J\vdash i$, denote by $|J|$ the number of blocks in the partition $J$. With this notation, for $i>>D$, the coefficient of $z^i$ in the expansion of \eqref{genbound} is at most
    \begin{equation}
    \label{eq:upper1}
        \sum_{j=1}^D\binom{D}{j}j!|\{J\vdash i~|~|J|=j\}|\le \sum_{j=1}^D \binom{D}{j}j!|\{J\vdash i\}|.
    \end{equation}
    The bound on the left-hand side comes from observing that the coefficient of $z^i$ is the number of all possible ways of choosing $1\le j\le D$ factors $z^{i_1},\ldots,z^{i_j}$ with $i_a>0$ from the $D$ factors in \eqref{genbound} such that $i_1+\ldots+i_j=i$.  In other words, this amounts to choosing $j$ factors which have positive powers of $z$ (from the $D$ total factors) along with an ordered partition of $i$ into $j$ pieces.  Since partitions are naturally unordered, the set of ordered partitions with $j$ pieces has at most $j!$ elements for each unordered partition with $j$ pieces.

The Hardy-Ramanujan asymptotic for the number $|\{J\vdash i\}|$ of partitions of $i$ gives
$C_1,C_2>0$ so that
\begin{equation}
\label{eq:HR}
|\{J\vdash i\}| \leq C_1e^{C_2\sqrt{i}}.
\end{equation}

Applying this to the upper bound \eqref{eq:upper1} gives

    \begin{align*}
        \eqref{eq:upper1}&\le C_1e^{C_2\sqrt{i}}  \left(\sum_{j=1}^D\binom{D}{j}j!\right)\\
        &\le D!  C_1e^{C_2\sqrt{i}}   \left(\sum_{j=1}^D \binom{D}{j}\right)\\
        &\le 2^D D! C_1e^{C_2\sqrt{i}}.
    \end{align*}

\end{proof}

\paragraph{Consequence: bounding the representation unstable cohomology of products.}
The following corollary is also a key ingredient in bounding the representation unstable cohomology of configuration spaces.

\begin{corollary}
\label{corollary:bounding:products}
Let $X$ be a connected space such that $\dim H^\ast(X;\Qb)<\infty$.   For each $0\leq a
\leq n$ there are constants $K,L>0$ so that for each $i\geq 0$, and for all $n\geq 1$:
\[\dim (H^i(X^n;\Q)^{S_{n-a}})\leq Ke^{L\sqrt{i}}.\]
\end{corollary}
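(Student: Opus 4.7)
The plan is to reduce the statement to the symmetric product bound of Theorem~\ref{thm:symbound} via a Künneth decomposition that splits off the first $a$ coordinates. Write $X^n \cong X^a \times X^{n-a}$, where $S_{n-a}$ acts trivially on the first factor and by permutation on the second. Working over $\Q$, Künneth gives an $S_{n-a}$-equivariant isomorphism
\begin{equation*}
H^i(X^n;\Q) \cong \bigoplus_{j+k=i} H^j(X^a;\Q) \otimes H^k(X^{n-a};\Q),
\end{equation*}
and taking $S_{n-a}$-invariants (which is exact in characteristic $0$) yields
\begin{equation*}
H^i(X^n;\Q)^{S_{n-a}} \cong \bigoplus_{j+k=i} H^j(X^a;\Q) \otimes H^k(X^{n-a};\Q)^{S_{n-a}}.
\end{equation*}
By the standard transfer argument, $H^k(X^{n-a};\Q)^{S_{n-a}} \cong H^k(\Sym^{n-a}(X);\Q)$.

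Next I would bound each factor separately. The first factor has total dimension $\dim H^\ast(X^a;\Q) = (\dim H^\ast(X;\Q))^a =: M_a$, a constant depending only on $X$ and $a$ (not on $i$ or $n$); in particular $\dim H^j(X^a;\Q) \le M_a$ for every $j$. For the symmetric-product factor, Theorem~\ref{thm:symbound} applied to $X$ (whose hypotheses are satisfied by assumption) yields constants $K',L'>0$, depending only on $X$, such that
\begin{equation*}
\dim H^k(\Sym^{n-a}(X);\Q) \le K' e^{L'\sqrt{k}} \le K' e^{L'\sqrt{i}}
\end{equation*}
for all $0\le k\le i$ and all $n-a\ge 1$ (when $n=a$, the second factor is trivial and the bound is immediate).

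Combining these bounds over the $i+1$ terms of the Künneth decomposition,
\begin{equation*}
\dim H^i(X^n;\Q)^{S_{n-a}} \le (i+1)\cdot M_a \cdot K' e^{L'\sqrt{i}}.
\end{equation*}
The polynomial factor $(i+1)$ is absorbed into the exponential $e^{L'\sqrt{i}}$ by enlarging $L'$ slightly (say $L = L'+1$) and choosing $K$ accordingly, giving $\dim H^i(X^n;\Q)^{S_{n-a}} \le K e^{L\sqrt{i}}$ as required. No step here seems to present a serious obstacle: the only non-trivial input is Theorem~\ref{thm:symbound}, which has already been established; the remaining work is a bookkeeping application of Künneth and transfer. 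The mild subtlety to note is simply that the constants $K,L$ are allowed to depend on $a$ (and on $X$) but must be independent of $i$ and $n$, which the argument respects since $M_a$ is fixed once $a$ is fixed.
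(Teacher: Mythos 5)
Your proof is correct and follows essentially the same approach as the paper's: split $X^n \cong X^{n-a}\times X^a$ via Künneth, bound the $X^a$ contribution by a fixed constant, invoke transfer and Theorem~\ref{thm:symbound} for the symmetric-product factor, and absorb the $(i+1)$ polynomial prefactor into the exponential. The only cosmetic difference is that you spell out the absorption of the $(i+1)$ factor into $e^{L\sqrt{i}}$, which the paper leaves implicit.
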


\begin{proof}
Since the action $S_{n-a}$ leaves invariant the first $n-a$ factors of $X^n$ and acts as the identity on the last $a$ factors, there is, for each $i\geq 0$, an isomorphism:
\begin{equation}
\label{eq:prods1}
H^i(X^n)^{S_{n-a}}=\bigoplus_{p+q=i}H^p(X^{n-a})^{S_{n-a}}\otimes H^q(X^a).
\end{equation}
Since this sum has $i+1$ terms, it suffices to bound the dimension of each summand by $Ke^{L\sqrt{i}}$.  Since $\dim H^\ast(X;\Qb)<\infty$ and since $a$ is fixed,  there is a constant $C$, not depending on $q$, so that $\dim(H^q(X^a))\leq C$.  It thus suffices to bound each $H^p(X^{n-a})^{S_{n-a}}$ by $Ke^{L\sqrt{i}}$.  Now transfer together with Theorem~\ref{thm:symbound} gives
$\dim(H^p(X^{n-a})^{S_{n-a}})\leq Ke^{L\sqrt{p}}\leq Ke^{L\sqrt{i}}$
for some constants $K,L$, since $p\leq i$.
\end{proof}

\subsection{Bounding the representation unstable cohomology of configuration spaces}

We build on the subexponential upper bounds for products in the last section to prove the corresponding result for configuration spaces.

\begin{theorem}
\label{theorem:subexp2}
Let $X$ be a smooth, orientable manifold with $\dim(H^*(X;\Q))<\infty$ (e.g. $X$ compact).
Then the co-FI manifold $\PConf_\bullet(X)$ has convergent singular cohomology.
\end{theorem}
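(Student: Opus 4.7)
The plan is to combine the product bound Corollary \ref{corollary:bounding:products} with a spectral sequence comparing $\PConf_n(X)$ and $X^n$, namely the Leray (Cohen--Taylor--Totaro) spectral sequence of the open inclusion $j\colon\PConf_n(X)\hookrightarrow X^n$. For $X$ smooth orientable of real dimension $d\geq 2$, the $E_2$-page of this spectral sequence is $S_n$-equivariantly generated as an algebra over $H^*(X^n;\Q)$ by degree-$(d-1)$ diagonal classes $G_{ij}$ (one per unordered pair $i\neq j$) modulo Arnold relations. Because $\Q$-invariants under a finite group are exact, convergent cohomology in the sense of Definition \ref{def:convergent} is inherited from $E_2$ to its subquotient $E_\infty$, so it suffices to bound $(E_2^{p,q})^{S_{n-a}}$ subexponentially in $i=p+q$, uniformly in $n$. (The case $d=1$, where $X$ is a disjoint union of circles and intervals, is treated by direct analysis.)

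Next, I would decompose $(E_2^{p,q})^{S_{n-a}}$ into pieces indexed by $S_{n-a}$-orbits of the ``shape'' of a monomial $G_{i_1j_1}\cdots G_{i_sj_s}$, i.e.\ an admissible collection of $s=q/(d-1)$ unordered pairs of indices. Each shape involves at most $2s\leq 2i/(d-1)$ indices. Via Frobenius reciprocity applied to each orbit, the invariant contribution from a single shape is bounded by the $S_{n-a'}$-invariants of $H^p(X^n)$, where $a'\leq a+O(i)$ records the indices pinned down by the shape. Applying Corollary \ref{corollary:bounding:products} then yields a per-orbit bound of the form $K(a')\,e^{L\sqrt{i}}$, where the constants depend on $a'$ and on $\dim H^*(X;\Q)$.

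Summing over shape-orbits requires two subexponential-in-$i$ counts that together constitute the main obstacle of the proof. First, one must enumerate admissible shape-orbits: a naive count of labeled monomials is exponential in $s=O(i)$, so one must exploit Arnold's relations together with the finite generation of the FI-module $H^*(\PConf_\bullet(X);\Q)$ (via Theorem 6.2.1 of \cite{CEF1} and its explicit $O(i)$ generation degree) to reduce the count to something controlled by the number of partitions of integers of size $O(i)$, hence $O(e^{C\sqrt{i}})$ by the Hardy--Ramanujan asymptotic \eqref{eq:HR}. Second, one must track how the constant $K(a')$ in Corollary \ref{corollary:bounding:products} grows with $a'=a+O(i)$: a careful bookkeeping in its proof shows this dependence is mild enough (polynomial in $a'$, or absorbable into the subexponential factor) to preserve subexponential growth in $i$. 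Combining these two counts, multiplying by the per-orbit bound, and summing over $p+q=i$ gives the required function $F_a(i)$, which via the equivalence recorded at \eqref{eq:conv5}--\eqref{eq:conv6} establishes convergent singular cohomology for $\PConf_\bullet(X)$.
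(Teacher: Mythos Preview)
Your overall setup---reduce to the $E_2$-page $H^*(X^n;\Q)\otimes A(n,m)$ via Totaro, then use that invariants pass to subquotients---is exactly how the paper begins. The gap is in the per-orbit bound. You propose to bound each shape-orbit by $\dim H^{p}(X^n)^{S_{n-a'}}$ with $a'=a+O(i)$ and then invoke Corollary~\ref{corollary:bounding:products}. But the constant in that corollary is \emph{not} polynomial in $a'$: examining its proof, the bound is
\[
\dim H^{p}(X^n)^{S_{n-a'}}\;\le\;(p+1)\cdot\Big(\max_{k\le p}\,b_k(X^{a'})\Big)\cdot K e^{L\sqrt{p}},
\]
and $\max_k b_k(X^{a'})$ is typically exponential in $a'$ (for $X=S^2$ it is $\binom{a'}{\lfloor a'/2\rfloor}\sim 2^{a'}$). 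With $a'\asymp i$ this destroys subexponentiality; the tradeoff $p+q=i$ does not save you, as one checks already for $X=S^2$, $d=2$, $q'=i/3$. So the claim that ``careful bookkeeping'' shows $K(a')$ is polynomial in $a'$ is false, and Frobenius reciprocity per orbit cannot close the argument as written.

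The paper avoids this with a Cauchy--Schwarz trick that decouples the two tensor factors:
\[
\dim(V\otimes W)^{S_{n-a}}\;\le\;\sqrt{\dim\big((V^{\otimes 2})^{S_{n-a}}\big)\cdot\dim\big((W^{\otimes 2})^{S_{n-a}}\big)}.
\]
Applied with $V=H^p(X^n)$ and $W=A(n,m)_q$, the first factor becomes $\dim H^{2p}((X\times X)^n)^{S_{n-a}}$, to which Corollary~\ref{corollary:bounding:products} applies with the \emph{fixed} original $a$ (not $a+O(i)$), giving a genuine $Ke^{L\sqrt{i}}$ bound. The second factor $\dim(A(n,m)_q^{\otimes 2})^{S_{n-a}}$ is handled separately: the paper uses the Lehrer--Solomon decomposition $A(n,2)_s=\bigoplus_\mu\Ind_{Z(c_\mu)}^{S_n}\xi_\mu$ (sum over conjugacy classes with $n-s$ cycles, hence $\le$ partitions of $s$ many terms) and then bounds each $\big(\Ind_{Z(c_\mu)}^{S_n}\xi_\mu\otimes\Ind_{Z(c_\nu)}^{S_n}\xi_\nu\big)^{S_{n-a}}$ by a double-coset count in $S_{n-a}\backslash(S_n/S_{n-2i}\times S_n/S_{n-2i})$, which is polynomial in $i$. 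That is the missing idea: separate the factors first, so that the symmetric-product bound is only ever invoked with fixed $a$.
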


\begin{proof}
   Fix $a\geq 0$.  Denote by $S_{n-a}$ the subgroup $S_{n-a}\times 1\subset S_n$.  We will prove that there is a function $F_a(i)$, subexponential in $i$, so that:
    \begin{equation*}
        \dim \left(H^i(\PConf_n(X);\Qb)\right)^{S_{n-a}}\le F_a(i)
    \end{equation*}
    for all $i\geq 0$.  Let $m$ be the real dimension of $X$, and denote by $A(n,m)$ the graded commutative algebra
    \begin{equation*}
        A(n,m):=\Qb[\{G_{ab}\}_{1\le a\neq b\le n}]/I
    \end{equation*}
    where $|G_{ab}|=2m-1$ and $I$ is the ideal generated by the elements
    \begin{align*}
        &G_{ab}-G_{ba}\\
        &G_{ab}G_{ac}+G_{bc}G_{ba}+G_{ca}G_{cb}
    \end{align*}
    for $a<b<c$ distinct. The group $S_n$ acts on $A(n,m)$ via $\sigma\cdot G_{ab}:=G_{\sigma(a)\sigma(b)}$. Totaro \cite[Theorem 4]{To} has shown that $H^\ast(\PConf_n(X);\Qb)$ is isomorphic, as a graded $S_n$-representation, to a sub-quotient of
    \begin{equation*}
        H^\ast(X^n;\Qb)\otimes A(n,m),
    \end{equation*}
    with the natural action on each factor.

    Note that, for any short exact sequence of $S_n$-representations
    \begin{equation*}
        0\to V_0\to V_1\to V_2\to 0
    \end{equation*}
    over a field of characteristic 0, there exists an $S_n$-equivariant splitting
    \begin{equation*}
        V_1\cong V_0\oplus V_2.
    \end{equation*}
    In particular,
    \begin{equation*}
        \dim V_1^{S_n}= \dim V_0^{S_n}+\dim V_2^{S_n},
    \end{equation*}
    and, more generally, if $V$ is any sub-quotient of an $S_n$-representation $W$, we have
    \begin{equation*}
        \dim V^{S_n}\le \dim W^{S_n}.
    \end{equation*}

 Let $V$ and $W$ be any two $S_n$-representations.    The identity $\dim(V^{S_n})=\langle \chi_V,1\rangle$ and the Cauchy-Schwarz inequality give:
    \begin{align*}
        \dim (V\otimes W)^{S_n}&=\langle \chi_{V\otimes W},1\rangle\\
        &=\frac{1}{n!}\sum_{\sigma\in S_n} \chi_V(\sigma)\chi_W(\sigma)\\
        &\le \frac{1}{n!}\sqrt{\left(\sum_{\sigma\in S_n} \chi_V(\sigma)^2\right)\left(\sum_{\sigma\in S_n}\chi_W(\sigma)^2\right)}\\
        &=\sqrt{\langle \chi_{V^{\otimes 2}},1\rangle\langle\chi_{W^{\otimes 2}},1\rangle}\\
        &=\sqrt{\dim((V^{\otimes 2})^{S_n})\dim((W^{\otimes 2})^{S_n})}.
    \end{align*}
    Specializing to our setting, we conclude that it suffices to show that
    \begin{equation*}
        (\dim \left(H^i(X(\Cb)^n;\Qb)^{\otimes 2}\right)^{S_{n-a}})\cdot (\dim (A(n,m)^{\otimes 2})^{S_{n-a}})\le F_a(i)
    \end{equation*}
for some $F_a(i)$ subexponential in $i$.  For the first factor, by K\"unneth and the definition of the action, we have
    \begin{align*}
        (H^i(X^n;\Qb)^{\otimes 2})^{S_{n-a}}&\subset H^{2i}(X^n\times X^n;\Qb)^{S_{n-a}}\\
        &\cong H^{2i}((X\times X)^n;\Qb)^{S_{n-a}}\\
        &\cong \bigoplus_{j=0}^{2i} H^j((X\times X)^{n-a};\Qb)^{S_{n-a}}\otimes H^{2i-j}((X\times X)^a;\Qb).
    \end{align*}
    By transfer, this is isomorphic to
    \begin{align*}
        \bigoplus_{j=0}^{2i} H^j(\Sym^{n-a}(X\times X);\Qb)\otimes H^{2i-j}((X\times X)^a;\Qb)
    \end{align*}
    Let $C=\max_i b_i(X\times X)$, and let $D=C\cdot 2m$. By K\"unneth, for all $j<2i$,
    \begin{equation*}
       \dim H^{2i-j}(X^{2a};\Qb)\le (2i-j)C^a.
    \end{equation*}
    Combining this with Theorem \ref{thm:symbound}, we see that
    \begin{align*}
        \dim (H^i(X^n;\Qb)^{\otimes 2})^{S_{n-a}}&\le \sum_{j=0}^{2i} \dim \left(H^j(\Sym^{n-a}(X\times X);\Qb)\right)(2i-j)C^a\\
        &\le \sum_{j=0}^{2i} 2^D D! C_1 e^{C_2\sqrt{j}}(2i-j)C^a\\
        &\le 2^D D!C^a(2i)^2C_1e^{C_2\sqrt{2i}}.
    \end{align*}

It remains to bound $\dim (A(n,m)\otimes A(n,m))_i^{S_{n-a}}$. Well,

\begin{equation}
\label{eq:anm}
(A(n,m)\otimes A(n,m))_i=\bigoplus_{p+q=i}(A(n,m)_p\otimes  A(n,m)_q)
\end{equation}

Since the right-hand side of \eqref{eq:anm} has at most $2i$ terms, it suffices to bound each $[A(n,m)_p\otimes  A(n,m)_q]^{S_{n-a}}$.  By the Cauchy-Schwartz inequality, as above, it suffices to bound $[A(n,m)_p\otimes A(n,m)_p]^{S_{n-a}}$ for each $1\leq p\leq i$.  To obtain this bound, first note that the algebra $A(n,m)$ is isomorphic to $A(n,2)$ via an isomorphism that takes the $p^{\rm th}$ graded piece of $A(n,2)$ to the
$(2m-1)p^{\rm th}$ graded piece of $A(n,m)$. Since $m$ is fixed and so $2m-1$ is fixed,
it suffices to bound $[A(n,2)_p\otimes A(n,2)_p]^{S_{n-a}}$ in terms of $i$, for each $1\leq p\leq i$.

Lehrer-Solomon \cite{LS} give an explicit description of $A(n,2)$ as a sum
of induced representations
\[A(n,2)_p=\bigoplus_\mu \Ind_{Z(c_\mu)}^{S_n}(\xi_\mu)\]
where $\mu$ runs over the set of conjugacy classes in $S_n$ of permutations having $n-p$ cycles, $c_\mu$ is any element of the conjugacy class $\mu$, and $\xi_\mu$ is a one-dimensional character of the centralizer  $Z(c_\mu)$ of $c_\mu$ in $S_n$ (we will not
need an explicit description of $\xi_\mu$).   It follows that

\begin{equation}
\label{eq:tensind1}
(A(n,2)_p\otimes A(n,2)_p)^{S_{n-a}}=\bigoplus_{\mu,\nu}[ \Ind_{Z(c_\mu)}^{S_n}(\xi_\mu)\otimes \Ind_{Z(c_\nu)}^{S_n}(\xi_\nu)]^{S_{n-a}}
\end{equation}
where $\nu$ is defined similarly to $\mu$.  The summands contributing to the first (resp. second) $A(n,2)_p$ factor in \eqref{eq:tensind1} correspond to conjugacy classes $c_\mu$ (resp. $c_\nu$) in $S_n$ decomposing into $n-p$ cycles.  The number of such conjugacy classes is in bijection with the set of partitions of $p$, which is less than the number of partitions of $i$ since $p\leq i$.  Thus the number of terms in the sum on the right-hand side of \eqref{eq:tensind1} is, by \eqref{eq:HR},  at most $ [C_1e^{C_2\sqrt{i}}]^2=C_1^2e^{2C_2\sqrt{i}}$.  As this is subexponential in $i$, it suffices to bound the dimension of $[ \Ind_{Z(c_\mu)}^{S_n}(\xi_\mu)\otimes \Ind_{Z(c_\nu)}^{S_n}(\xi_\nu)]^{S_{n-a}}$.

Now, a permutation $c_\mu$ decomposing into $n-p$ cycles must have at least $n-2p$ fixed points. This implies that the centralizer $Z(c_\mu)$ contains the subgroup $S_{n-2p}$, and thus $S_{n-2i}$ since $p\leq i$.   It follows that $\Ind_{Z(c_\mu)}^{S_n}(\xi_\mu)$ is a subrepresentation of $\Ind_{S_{n-2i}}^{S_n}(\xi_\mu)$.  Thus
\begin{equation}
\label{eq:tensind2}
[ \Ind_{Z(c_\mu)}^{S_n}(\xi_\mu)\otimes \Ind_{Z(c_\nu)}^{S_n}(\xi_\nu)]^{S_{n-a}} \subset
[\Ind_{S_{n-2i}}^{S_n}(\xi_\mu)\otimes \Ind_{S_{n-2i}}^{S_n}(\xi_\nu)]^{S_{n-a}}
\end{equation}

Let $\chi_\mu$ and $\chi_\nu$ denote the characters of $\xi_\mu$ and $\xi_\nu$, respectively.
The right-hand side of \eqref{eq:tensind2} consists of the set of bilinear functions
$f:S_n\times S_n\to \C$ satisfying
\[f(\sigma\cdot g,\tau\cdot h)=
\chi_\mu(\sigma)\chi_\nu(\tau)f(g,h)
 \ \ \ \forall \sigma,\tau\in S_{n-2i}\ \ \text{and} \ \ \forall g,h\in S_n\]
 and
 \[f(g\cdot\beta,h\cdot\beta)=f(g,h) \ \ \ \forall\beta\in S_{n-a}\ \ \text{and} \ \ \forall g,h\in S_n.\]
 It follows that the dimension of this vector space is at most the number of double cosets
 \[S_{n-a}\backslash [S_n/S_{n-2i} \times S_n/S_{n-2i}].\]

We claim that this number is polynomial in $i$. Indeed, it is equal to the number of maps $f\colon \{1,\ldots,a\}\to \{1,\ldots,2i,\star\}\times \{1,\ldots,2i,\star\}$ such that $|f^{-1}(j,k)|\leq 1$ and $|f^{-1}(j,\star)|,|f^{-1}(\star,k)|\leq (n-2i)^2$.  Since $a$ is fixed, this number is bounded by a constant times the number of subsets of $\{1,\ldots,2i,\star\}\times \{1,\ldots,2i,\star\}$ of size $\leq a$, which is $O(i^{2a})$.  This completes the proof of Theorem~\ref{theorem:subexp2}.
\end{proof}

\section{Stability of arithmetic statistics}
\label{section:arithmetic:stats}

Throughout this section we will fix a prime power $q=p^d$ and a prime $\ell$ not divisible by $p$.

\subsection{Point counting and \'{e}tale cohomology}
\label{section:background}

Let $Y$ be a scheme of finite type (not necessarily smooth) over $\Z[1/N]$.  We can base change to $\Spec(\F_p)$ for any prime $p\nmid N$, and for any positive power $q=p^d$ we can consider both the $\F_q$-points as well as the $\Fqbar$-points of $Y$, where $\Fqbar$ is the algebraic closure of $\F_q$.  The {\em geometric Frobenius} morphism $\Frob_q\colon Y\to Y$ acts on $Y(\Fqbar)$ by acting on the coordinates $(y_1,\ldots ,y_d)$ of any affine chart of $y$ via
\[\Frob_q(y_1,\ldots ,y_d):=(y_1^q,\ldots ,y_d^q).\]
A point $y\in Y(\Fqbar)$ will be fixed by $\Frob_q$ precisely when $y\in Y(\F_q)$.  Thus
\[Y(\F_q)=\Fix (\Frob_q: Y(\Fqbar)\to Y(\Fqbar)).\]

Fix a prime $\ell$ not dividing $q$, and let $\Q_\ell$ denote the $\ell$-adic rationals.  Let
$H^*_{\et}(Y_{/\Fqbar};\Q_\ell)$ (resp.\ $H^*_{\et,c}(Y_{/\Fqbar};\Q_\ell)$) denote the \'{e}tale cohomology groups (resp.\ compactly supported \'{e}tale cohomology groups) of the base change $Y_{/\Fqbar}$ of $Y$ to $\Fqbar$ (see, e.g., \cite{De2,Mi}).    Denote by $\Qb_\ell(-i)$ the rank 1 $\Gal(\Fqbar/\Fb_q)$-representation on which Frobenius acts by $q^i$.

Let $\V$ be a constructible, rational $\ell$-adic sheaf on $Y$ (cf. e.g. \cite{FK}).  If $y\in Y(\Fqbar)$ is a fixed point for the action of $\Frob_q$, then $\Frob_q$ acts on the stalk $\V_y$ over $y$.  Attached to this action is its trace $\tr\big(\Frob_q:\V_y\to \V_y)$.  The {\em twisted Grothendieck--Lefschetz Trace Formula} (\cite[Theorem II.3.14]{FK} and \cite[6.1.1.1]{De2}) gives:

\begin{equation}
\label{eq:GLtwisted}
\sum_{y\in Y(\F_q)}\tr\big(\Frob_q:\V_y\to\V_y)
=\sum_{i=0}^{2\dim(Y)}(-1)^i\tr\big(\Frob_q: H^i_{\et,c}(Y;\V)\to H^i_{\et,c}(Y;\V)\big)
\end{equation}

When $Y$ is smooth, Poincar\'{e} duality for \'{e}tale cohomology \cite[Theorem 24.1]{Mi} gives
\begin{equation}
\label{eq:PD}
H^i_{\et,c}(Y_{/\Fqbar};\V)\iso H^{2\dim(Y)-i}_{\et}(Y_{/\Fqbar};\V(-\dim(Y)))^*.
\end{equation}
Plugging this into Equation \eqref{eq:GLtwisted} gives, for smooth $Y$:
\begin{equation}
\label{eq:smoothGLtwisted}
\sum_{y\in Y(\F_q)}\tr\big(\Frob_q:\V_y\to\V_y)
=q^{\dim (Y)} \sum_{i=0}^{2\dim(Y)}(-1)^i\tr\big(\Frob_q: H^i_{\et}(Y;\V)^*\to H^i_{\et}(Y;\V)^*\big)
\end{equation}

\paragraph{\boldmath$S_n$-schemes.}  Now let $Z$ be smooth and quasi-projective over $\Z[1/N]$.  Suppose that the symmetric group $S_n$ acts generically freely on $Z$ by automorphisms, and let $p:Z\to Y$ denote the quotient map.   By \cite[Theorem p. 63 and Remark p. 65 (Ch. 2.7)]{Mu}, $Y$ is a scheme. It is typically not smooth even when $Z$ is smooth.

Recall that any finite-dimensional representation of $S_n$ over a field of characteristic $0$ is defined over $\Q$.  There is a bijective correspondence between isomorphism classes of finite-dimensional $S_n$-representations and finite-dimensional constructible sheaves on $Y$ that become isomorphic to $\Q_\ell^{\oplus n}$ when pulled back to $Z$:  Given an $S_n$-representation $V$ over $\Q_\ell$, one can form an $S_n$-equivariant, locally constant sheaf $\V$ over $Z$ with fiber $V$. Pushing forward to $Y$ and taking $S_n$ invariants, i.e. $(p_\ast\V)^{S_n}$, we obtain a constructible sheaf of $\Q_\ell$ vector spaces over $Y$ which is a sheaf-theoretic analogue of the usual topological diagonal quotient ``$Z\times_{S_n}V$''.

Suppose that $y\in Y(\Fqbar)$ is fixed by $\Frob_q$.  Then $\Frob_q$ acts on the fiber $p^{-1}(y)$. Now $S_n$ acts transitively on $p^{-1}(y)$ with some stabilizer $H$ (not depending on $\tilde{y}\in p^{-1}(y)$), and so we can identify $p^{-1}(y)$ with $S_n/H$.  The $\Frob_q$ action on $p^{-1}(y)$ commutes with this $S_n$ action, and so it is determined by its action on a single basepoint, which we choose once and for all to be $H$.  Now $\Frob_q(H)=\sigma_y H$ for $\sigma_y\in S_n$.  Following Gadish \cite{Ga}, for any $S_n$-representation $V$ and any coset $\sigma H$ of $S_n$,  we set
\[\chi_V(\sigma H):=\frac{1}{|H|}\sum_{h\in H}\chi_V(\sigma h).\]
With this notation we have:
\begin{equation}
\label{eq:trace:frob}
\tr(\Frob_q:\V_y\to\V_y)=\chi_V(\sigma_y H)
\end{equation}
which we denote simply by $\chi_V(\Frob_q;\V_y)$.  More generally:

\begin{definition}
\label{definition:P(y)}
For any class function $P$, and any $y\in Y$ fixed by $\Frob_q$, define $P(y)$ by:
\begin{equation}\label{eq:charpoly}
    P(y):=\frac{1}{|H|}\sum_{h\in H}P(\sigma_y h).
\end{equation}
\end{definition}

An elementary check shows that the definitions above are independent of the choice of coset $H$, since the action of $S_n$ is transitive on fibers.

Plugging Equation \eqref{eq:trace:frob} into Equation \eqref{eq:GLtwisted} now gives:

\begin{equation}
\label{eq:GLtwisted2}
\sum_{y\in Y(\F_q)}\chi_V(\Frob_q;\V_y)
=\sum_{i=0}^{2\dim(Y)}(-1)^i\tr\big(\Frob_q: H^i_{\et,c}(Y;\V)\to H^i_{\et,c}(Y;\V)\big)
\end{equation}
The right-hand side of \eqref{eq:GLtwisted2} could be computed from
the eigenvalues $\lambda_{ij}$ of $\Frob_q$ on each $H^i_{\et,c}(Y;\Q_\ell)$.  Typically one only has estimates on $|\lambda_{ij}|$.  For example, for $Y$ smooth and proper, the Riemann Hypothesis for finite fields (proved by Deligne) gives that $|\lambda_{ij}|=q^{i/2}$. Many natural examples $Y$, including many of those we study in this paper, are not proper, and finding the $\lambda_{ij}$ is more difficult.

Given that we only have general bounds on the eigenvalues of $\Frob_q$, to bound the traces of $\Frob_q$ we must determine the dimensions of each $H^i_{\et,c}(Y;\V)$.  To do this, we follow the argument in \S 3.3 of \cite{CEF2}.  First note that the pullback $\widetilde{\V}$ of $\V$ to $Z$ is trivial.  We then compute:

\begin{equation}
\begin{array}{lll}
H^i_{\et,c}(Y;\V)&\cong H^i_{\et,c}(Z;\widetilde{\V})^{S_n}&\text{by transfer} \\
&&\\
&\cong (H^i_{\et,c}(Z;\Q_\ell)\otimes V)^{S_n}&\text{by triviality of $\widetilde{\V}|_Z$}\\
&&\\
&\cong (H^{2\dim(Z)-i}_{\et}(Z;\Q_\ell(\dim(Z)))^\ast\otimes V)^{S_n}&\text{by Poincar\`e duality}\\
&\\
&\cong H^{2\dim(Z)-i}_{\et}(Z;\Q_\ell(\dim(Z)))^\ast\otimes_{\Q_\ell[S_n]} V
\end{array}\label{eq:transfer1}
\end{equation}

Because every $S_n$-representation is self-dual, it follows that
\begin{equation}
\label{eq:dim:is:inner:prod}
    \dim_{\Q_\ell}H^i_{\et,c}(Y;\V)=\big\langle V,H_{\et}^{2\dim(Z)-i}(Z;\Q_\ell)\big\rangle_{S_n}
\end{equation}
where $\langle V,W\rangle_{S_n}$ is the usual inner product of $S_n$-representations $V$ and $W$: \[\langle V,W\rangle=\dim_{\Q_\ell}\Hom_{\Q_\ell[S_n]}(V,W).\]

\subsection{Co-FI schemes with convergent \'{e}tale cohomology}

Now that we have discussed schemes, and $S_n$-schemes, we are ready to discuss sequences of $S_n$-schemes.

Let $Z$ be a co-FI scheme, smooth and quasi-projective over $\Z[1/N]$.   For each $i\geq 0$, the \'etale cohomology $H^i_{\et}(Z_\bullet/_{\Fqbar};\Q_\ell)$ is an FI-module over $\Q_\ell$.  We want to consider the implications of finite generation of this FI-module for point-counting problems over $\F_q$ for the sequence of schemes $Z_n/S_n$ (cf. \cite[Theorem p. 63 and Remark p. 65]{Mu}).

As discussed in \cite{CEF1}, any partition $\lambda$ of any $k\geq 1$ determines a finitely-generated FI-module $V(\lambda)$ with $V(\lambda)_n$ being the irreducible representation of $S_n$ corresponding to the partition $(n-|\lambda|)+\lambda$.
\begin{definition}
    Let $K$ be a field, and let $M_\bullet$ be a finitely generated $\Gal(\Kbar/K)$-FI-module over $\Q_\ell$ with stable range $N$. Let $\lambda$ be a partition of $n$, let $V(\lambda)$ the associated $FI$-module, and let $D=\max\{N,\lambda_1\}$. Define the \emph{stable $\lambda$-isotypic part $M_\lambda$ of $M$}  to be the $\Gal(\Fqbar/\F_q)$-module
    \begin{equation*}
        M_\lambda:=M_D\otimes_{\Q_\ell[S_D]}V(\lambda)_D.
    \end{equation*}
    More generally, for a character polynomial $P$, we define the \emph{stable $P$-isotypic part of $M$} to be the $\Q_\ell$-virtual Galois module $M_P$ obtained as a linear combination of the $M_\lambda$, with the sum taken in the representation ring of $\Gal(\Kbar/K)$ with $\Q_\ell$-coefficients.
\end{definition}

Lemma \ref{lemma:stableisotypics} shows that for $n\ge D$, there are canonical Galois-equivariant isomorphisms
\begin{equation*}
    M_n\otimes_{\Q_\ell[S_n]}V(\lambda)_n\to^\cong M_{n+1}\otimes_{\Q_\ell[S_{n+1}]}V(\lambda)_{n+1}
\end{equation*}
and similarly for the stable $P$-isotypic parts for $n\ge D$.

We can now give the following theorem, which generalizes earlier special cases by Ellenberg-Venkatesh-Westerland \cite{EVW}, Ellenberg \cite{E}, and Church-Ellenberg-Farb \cite{CEF2}.  Its proof is along the exact same lines of the previous proofs.  We hope that the generality of the statement here will be useful in future work.

\begin{theorem}[{\bf Convergent Grothendieck-Lefschetz}]
\label{theorem:arithmetic:stats1}
Let $Z$ be a smooth, quasi-projective co-FI over $\F_q$, and set $Y_n:=Z_n/S_n$ (we do not assume $Y_n$ smooth over $\Z[1/N]$). Assume that for each $i\geq 0$ the FI-module $H^i_{\et}(Z_{n/\Fqbar};\Q_\ell)$ is finitely generated, and, for a character polynomial $P$, denote by $H^i_{\et}(Z)^\ast_P$ the dual of the stable $P$-isotypic part. If $Z$ has convergent \'etale cohomology over $\Fqbar$ , then for any character polynomial $P$ :
\begin{equation}
\label{eq:convlim1}
\lim_{n \to \infty} q^{-\dim Y_n} \sum_{y \in Y_n(\F_q)} P(y)=\sum_{i=0}^\infty (-1)^i\tr\left(\Frob_q\circlearrowleft H^i_{\et}(Z)_P^\ast\right),
\end{equation}
and, taking the absolute value :
\begin{equation}
\label{eq:convlim2}
\lim_{n \to \infty} q^{-\dim Y_n} |\sum_{y \in Y_n(\F_q)} P(y)| \leq \sum_{i=0}^\infty
\frac{\langle P,H_{\et}^i(Z)\rangle}{q^{i/2}}<\infty.
\end{equation}

If $Z$ only has weakly convergent cohomology with convergence rate $\lambda$, then \eqref{eq:convlim1} and \eqref{eq:convlim2} hold for all $q>\lambda$.
\label{th:limit}
\end{theorem}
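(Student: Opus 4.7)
The plan is to establish the identity for each fixed $n$ via the twisted Grothendieck-Lefschetz formula, then pass to the limit $n\to\infty$ termwise in cohomological degree, and finally justify interchanging this limit with the resulting infinite sum over degrees using Deligne's weight bound together with the convergent cohomology hypothesis.

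For each $n$ I would apply \eqref{eq:GLtwisted2} to $Y_n=Z_n/S_n$ with the constructible (virtual) $\ell$-adic sheaf $\V_{P,n}$ attached to the class function $P_n$ on $S_n$, extended by linearity from the correspondence between $S_n$-representations and sheaves on $Y_n$ described in \S\ref{section:background}. Combining the resulting identity with the chain of isomorphisms \eqref{eq:transfer1} (transfer along $p\colon Z_n\to Y_n$, triviality of the pullback, and Poincar\'e duality on the smooth $Z_n$ with its Tate twist $\Q_\ell(\dim Z_n)$), and reindexing $i\mapsto j=2\dim Z_n-i$ so that the factor $q^{\dim Z_n}$ coming from the untwisted dual cancels $q^{-\dim Y_n}=q^{-\dim Z_n}$, yields
\begin{equation*}
q^{-\dim Y_n}\sum_{y\in Y_n(\F_q)} P(y) \;=\; \sum_{j=0}^{2\dim Z_n} (-1)^j \tr\!\Bigl(\Frob_q\circlearrowleft H^j_{\et}(Z_{n/\Fqbar};\Q_\ell)^{*}\otimes_{\Q_\ell[S_n]} V_{P,n}\Bigr).
\end{equation*}

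Next, fix $j\geq 0$. Writing $P$ as a $\Q_\ell$-linear combination of a finite set of stable irreducible characters $\chi_{V(\lambda)}$ (each $\chi_{V(\lambda)_n}$ for $\lambda\vdash k$ appears as a summand only once $n-k\geq\lambda_1$, and $P$ has bounded degree), the summand decomposes linearly into pieces $H^j_{\et}(Z_n)^{*}\otimes_{\Q_\ell[S_n]}V(\lambda)_n$. For each $\lambda$, Lemma~\ref{lemma:stableisotypics} applied to the finitely generated $\Gal(\Fqbar/\F_q)$-FI-module $H^j_{\et}(Z)^{*}$ and the FI-module $V(\lambda)$ produces canonical $\Gal$-equivariant isomorphisms onto the stable $\lambda$-isotypic $H^j_{\et}(Z)^{*}_\lambda$ once $n$ exceeds the sum of their stable ranges; naturality of the isomorphism in both variables promotes it from the purely $FI$-module statement to the Galois-equivariant one. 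Taking the maximum of the resulting thresholds over the finitely many $\lambda$ appearing in $P$ yields termwise convergence of the $j$-th summand to $(-1)^j\tr(\Frob_q\circlearrowleft H^j_{\et}(Z)^{*}_P)$.

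The main technical step, as anticipated in the introduction to \S\ref{section:convergent:cohomology}, is the interchange of the limit $n\to\infty$ with the infinite sum over $j$. Deligne's theorem bounds every eigenvalue of $\Frob_q$ on $H^j_{\et}(Z_{n/\Fqbar};\Q_\ell)^{*}$ by $q^{-j/2}$, and so
\begin{equation*}
\Bigl|\tr\!\bigl(\Frob_q\circlearrowleft H^j_{\et}(Z_n)^{*}\otimes_{\Q_\ell[S_n]} V_{P,n}\bigr)\Bigr| \;\leq\; \bigl\langle P,H^j_{\et}(Z_n)\bigr\rangle\cdot q^{-j/2},
\end{equation*}
using that $\dim\bigl(H^j_{\et}(Z_n)^{*}\otimes_{\Q_\ell[S_n]} V_{P,n}\bigr)=\langle P,H^j_{\et}(Z_n)\rangle$ by self-duality of $S_n$-representations. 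By Definition~\ref{def:convergent}, $\langle P,H^j_{\et}(Z_n)\rangle\leq F_P(j)$ with $F_P$ subexponential in $j$ and independent of $n$, so the series $\sum_j F_P(j)q^{-j/2}$ converges for every $q>1$; this provides a dominating summable sequence, uniform in $n$, and dominated convergence yields \eqref{eq:convlim1}. Applying the triangle inequality to the right-hand side and passing the bound to the limit yields \eqref{eq:convlim2}. In the weakly convergent case with rate $\lambda$, the same argument applies verbatim on the range of $q$ for which the dominating series $\sum_j F_P(j)q^{-j/2}$ still converges, which is precisely the range stated in the theorem.
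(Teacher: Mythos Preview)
Your proof is correct and follows essentially the same approach as the paper: both combine the twisted Grothendieck--Lefschetz formula with transfer and Poincar\'e duality on the smooth $Z_n$ to obtain the finite-$n$ identity, invoke \'etale representation stability (via Lemma~\ref{lemma:stableisotypics}) for termwise convergence in each cohomological degree, and use Deligne's weight bound together with the convergent-cohomology hypothesis to control the tail uniformly in $n$. The only organizational difference is that the paper packages the limit argument as a Cauchy-sequence estimate after first reducing by linearity to $P=P_\lambda$, whereas you phrase it as dominated convergence; these are equivalent, though your trace bound $\bigl|\tr\bigr|\le \langle P,H^j\rangle\, q^{-j/2}$ is cleanest once one has already reduced to an honest (non-virtual) $V(\lambda)_n$, as the paper does.
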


\begin{remark}\mbox{}
    \begin{enumerate}
        \item Specializing the Theorem~\ref{th:limit} to the case $P=1$ gives
            \begin{equation}
                \lim_{n\to\infty}q^{-\dim Y_n} |Y_n(\F_q)|=\sum_{i=0}^\infty(-1)^i\tr\left(\Frob_q\circlearrowleft H^i_{\et}(Y)^\ast\right),
            \end{equation}
            where $H^i_{\et}(Y)^\ast$ denotes the stable rational \'etale cohomology of the sequence $Y_1,Y_2,\ldots$.
        \item The bound \eqref{eq:convlim2} is sharp, as is seen by taking $Z_n=(\mathbb{P}^1)^n$, $Y_n=\mathbb{P}^n$, and $P=1$.
     \end{enumerate}
\end{remark}

\begin{proof}[Proof of Theorem~\ref{th:limit}]
    Because all of the equations in the statement of the theorem are $\Q_\ell$-linear in $P$, it suffices to prove the theorem for $P=P_\lambda$, the character polynomial of the finitely generated FI-module $V(\lambda)$(cf.\ Theorem~\ref{theorem:FIproperties2} above).  Let $\V_n$ correspond to the twisted sheaf on $Y_n$ corresponding to the representation $V(\lambda)_n$.

    We show that the left side of \eqref{eq:convlim1} converges by showing that the sequences
    \begin{align}\label{eq:cauchy}
        n &\mapsto q^{-\dim Y_n} \sum_{y\in Y_n(\F_q)} P(y)
    \end{align}
    is Cauchy. To start, note that
    \begin{align}
        \sum_{y\in Y_n(\F_q)} P(y)&=\sum_{i=0}^{2\dim(Y_n)}(-1)^i\tr\left(\Frob_q\colon H^i_{\et,c}(Y_n;\V_n)\to H^i_{\et,c}(Y_n;\V_n)\right)\nonumber\\
    &=\sum_{i=0}^{2\dim(Z_n)}(-1)^i\tr\left(\Frob_q\circlearrowleft H^{2\dim(Z_n)-i}_{\et}(Z_n;\Q_\ell(\dim(Z_n)))^\ast\otimes_{\Q_\ell[S_n]} V(\lambda)_n)\right)\tag{by Equation \eqref{eq:transfer1}}\nonumber\\
    &=\sum_{i=0}^{2\dim(Z_n)}(-1)^i\tr\left(\Frob_q\circlearrowleft H^i_{\et}(Z_n;\Q_\ell(\dim(Z_n)))^\ast\otimes_{\Q_\ell[S_n]}V(\lambda)_n^\ast\right).\label{eq:pdupstairs}
    \end{align}
    where the last equation uses the self-duality of $S_n$-representations.

    Denote by $N(n,P)$ the slope of stability of $H^\ast_{\et}(Z_\bullet;\Q_\ell)$ for $V(\lambda)$, i.e. the number such that for all $i\le N(n,\lambda)$,
    \begin{equation*}
        H^i_{\et}(Z_n;\Q_\ell)\otimes_{\Q_\ell[S_n]}V(\lambda)_n\cong H^i_{\et}(Z)_P.
    \end{equation*}
    Let $F_P(i)$ denote the subexponential function in Definition~\ref{def:convergent} guaranteed by the assumption that $Z$ has convergent \'{e}tale cohomology. Then, for $n>m$:
    \begin{align*}
        |q^{-\dim Y_n}&(\sum_{y\in Y_n(\F_q)} P(y)) - q^{-\dim Y_m}(\sum_{y\in Y_m(\F_q)} P(y))|\\
        =&|\sum_{i=0}^{2\dim(Z_n)} (-1)^i q^{-\dim(Z_n)}\tr\left(\Frob_q\circlearrowleft H^i_{\et}(Z_n;\Q_\ell(\dim(Z_n)))^\ast\otimes_{\Q_\ell[S_n]}V(\lambda)_n^\ast\right)\\
        &-\sum_{i=0}^{2\dim(Z_m)}(-1)^iq^{-\dim(Z_m)}\tr\left(\Frob_q\circlearrowleft H^i_{\et}(Z_m;\Q_\ell(\dim(Z_m)))^\ast\otimes_{\Q_\ell[S_m]}V(\lambda)_m\right)|\tag{by Equations \eqref{eq:GLtwisted} and \eqref{eq:pdupstairs}}\\
        \le &\sum_{i=0}^{\infty}q^{-i/2}|\langle P,H^i_{\et}(Z_n;\Q_\ell)\rangle- \langle P,H^i_{\et}(Z_m;\Q_\ell)\rangle|\tag{by Deligne}\\
    =&\sum_{i=N(m,P)}^\infty q^{-i/2}|\langle P,H^i_{\et}(Z_n;\Q_\ell)\rangle- \langle P,H^i_{\et}(Z_m;\Q_\ell)\rangle|\tag{by \'etale representation stability}\\
    \le & \sum_{i=N(m,P)}^\infty 2 q^{-i/2} F_P(i) \tag{by convergent cohomology}.
\end{align*}
Because $N(m,P)$ tends to $\infty$ with $m$, and because $F_P(i)$ is sub-exponential in $i$, we see that the sequence \eqref{eq:cauchy} is Cauchy. Similarly, we see that the right side of \eqref{eq:convlim1}
\begin{equation*}
    \sum_{i=0}^\infty(-1)^i\tr(\Frob_q\circlearrowleft H^i_{\et}(Z)_P^\ast)
\end{equation*}
converges as a consequence of the existence of the stable $P$-isotypic part, Deligne's bounds on the eigenvalues of $\Frob_q$ and the existence of the sub-exponential bounds $F_P(i)$.

It remains to show that the two limits agree. For this, we have
\begin{align}\label{eq:pfconvlim1}
    |q^{-\dim Y_n} &\sum_{y\in Y_n(\F_q)} P(y)-\sum_{i=0}^{2\dim(Z_n)}(-1)^i\tr(\Frob_q\circlearrowleft H^i_{\et}(Z)_P^\ast)|\\
    =&|\sum_{i=0}^{2\dim(Z_n)}(-1)^i\big(\tr\left(\Frob_q\circlearrowleft H^i_{\et}(Z_n;\Q_\ell(\dim(Z_n)))^\ast\otimes_{\Q_\ell[S_n]}V(\lambda)_n^\ast\right)\nonumber\\
    &-\tr\left(\Frob_q\circlearrowleft H^i_{\et}(Z)_P^\ast\right)\big)|\tag{by Equation \eqref{eq:pdupstairs}}\nonumber\\
    =&|\sum_{i=N(n,P)+1}^{2\dim(Z_n)}(-1)^i\big(\tr\left(\Frob_q\circlearrowleft H^i_{\et}(Z_n;\Q_\ell(\dim(Z_n)))^\ast\otimes_{\Q_\ell[S_n]}V(\lambda)_n^\ast\right)\nonumber\\
    &-\tr\left(\Frob_q\circlearrowleft H^i_{\et}(Z)_P^\ast\right)\big)|\tag{by \'etale representation stability}\nonumber\\
    \le &\sum_{i=N(n,P)+1}^{2\dim(Z_n)} 2q^{-i/2}F_P(i)\tag{by Deligne and convergent cohomology}\nonumber.
\end{align}
Because $F_P(i)$ is subexponential in $i$, we conclude that \eqref{eq:pfconvlim1} becomes arbitrarily small as $n$ approaches $\infty$, which proves the theorem.
\end{proof}

We can now prove Theorem C from the introduction, as well as the following.
\begin{theorem}[{\bf Statistics for \boldmath$\Sym^nX$}]
\label{theorem:extra2}
Theorem C with $\PConf_n(Y)$ (resp.\ $\UConf_n(Y)$) replaced by $Y^n$ (resp.\ $\Sym^n(Y)$) holds.
\end{theorem}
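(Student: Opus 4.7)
The plan is to apply the Convergent Grothendieck--Lefschetz formula (Theorem~\ref{theorem:arithmetic:stats1}) to the co-FI schemes $Z_\bullet = \PConf_\bullet(X)$ (for Theorem~C) and $Z_\bullet = X^\bullet$ (for Theorem~\ref{theorem:extra2}). Since the $S_n$-quotients of $\PConf_n(X)$ and $X^n$ are exactly $\UConf_n(X)$ and $\Sym^n(X)$ respectively, the output of Theorem~\ref{theorem:arithmetic:stats1} will be precisely the claimed limit formulas, provided its two hypotheses are satisfied: (i) finite generation of each FI-module $H^i_{\et}(Z_{n/\Fqbar};\Q_\ell)$, and (ii) convergent \'etale cohomology for $Z_\bullet$.

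For hypothesis (i), both finite generation statements have already been proved. Theorem~A gives finite generation of $H^i_{\et}(\PConf_\bullet(X)_{/\Fqbar};\Q_\ell)$, and Theorem~\ref{theorem:extra} gives the analog for $X^\bullet$. Both rely on normal compactifiability of the co-FI schemes (via Fulton--MacPherson compactifications in the $\PConf$ case) together with the base-change theorem~\ref{thm:basechange} applied to $I=\mathrm{FI}^{\op}$, which reduces finite generation to the corresponding statements over $\C$ established by Church--Ellenberg--Farb.

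For hypothesis (ii), we must produce, for each character polynomial $P$, a subexponential function $F_P(i)$ such that $|\langle P, H^i_{\et}(Z_{n/\Fqbar};\Q_\ell)\rangle| \le F_P(i)$ uniformly in $n$. This is exactly what Theorem~B provides for $Z_n = \PConf_n(X(\C))$ in singular cohomology, and what follows from Corollary~\ref{corollary:bounding:products} (via the equivalence of conditions \eqref{eq:conv5} and \eqref{eq:conv6}) for $Z_n = X(\C)^n$. To transfer these bounds from singular to \'etale cohomology, I would again invoke Theorem~\ref{thm:basechange} applied to $\mathrm{FI}^{\op}$: because the isomorphism $H^\ast_{\et}(Z_{n/\Fqbar};\Q_\ell) \cong H^\ast_{sing}(Z_n(\C);\Q_\ell)$ is natural with respect to the FI-structure, it intertwines the $S_n$-actions, so $S_n$-character inner products (and hence values of $\langle P, \cdot \rangle$) are preserved.

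With both hypotheses verified, Theorem~\ref{theorem:arithmetic:stats1} applies directly, yielding Theorem~C for $\PConf_\bullet(X)$ and Theorem~\ref{theorem:extra2} for $X^\bullet$. The main obstacle is really just the bookkeeping of ensuring the base-change isomorphism is $S_n$-equivariant (so that character-polynomial inner products match between the \'etale and topological sides); once this compatibility is in place, the whole argument reduces to plugging the previously established ingredients into Theorem~\ref{theorem:arithmetic:stats1}. All the real analytic work sits in Theorem~B and Theorem~\ref{thm:symbound}, and all the algebro-geometric comparison work sits in Theorem~\ref{thm:basechange}; the present theorems are the harvest.
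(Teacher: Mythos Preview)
Your proposal is correct and follows essentially the same approach as the paper's proof: verify finite generation via Theorem~A and Theorem~\ref{theorem:extra}, verify convergent \'etale cohomology via Corollary~\ref{corollary:bounding:products} and Theorem~\ref{theorem:subexp2} together with the $S_n$-equivariant base change of Theorem~\ref{thm:basechange}, then apply Theorem~\ref{theorem:arithmetic:stats1}. The paper's proof is a terse three-sentence version of exactly this; your added remark that the $S_n$-equivariance of the base-change isomorphism is what transfers the character-polynomial bounds is a helpful clarification but not a different idea.
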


\begin{proof} [Proof of Theorem C and Theorem~\ref{theorem:extra2}]
Theorem A gives that, for each $i\geq 0$, the FI-modules $H^*_{\et}(X^\bullet;\Q_\ell)$ and $H^*_{\et}(\PConf_\bullet(X);\Q_\ell)$ are finitely generated.
Corollary~\ref{corollary:bounding:products} (resp.\ Theorem~\ref{theorem:subexp2}) gives that the singular cohomology $H^*(X^\bullet;\Q)$ (resp.\ $H^*(\PConf_\bullet(X);\Q)$) of the co-FI scheme $X^\bullet$ (resp.\ $\PConf_\bullet(X)$) is convergent.  Since each of these co-FI schemes is normally compactifiable, it follows from base change (cf.\  Theorem~\ref{thm:basechange} above)  that the corresponding \'{e}tale cohomology groups are convergent.  Now apply Theorem~\ref{theorem:arithmetic:stats1}.
\end{proof}

In special cases it is possible to compute the right hand side of Equation \eqref{eq:convlim1} explicitly.
\begin{example}
    When $X=\Ab^r$, we can explicitly compute polynomial statistics on $\UConf_n(\Ab^r)$, extending the main theorem of \cite{CEF2}. Indeed, the computations of Arnol'd \cite{Ar} and F. Cohen \cite[\S 2]{Coh} combine with results of Bj\"orner--Ekedahl \cite[Theorem 4.9]{BE} to show that $H^\ast_{\et}(\PConf_n(\Ab^r)_{\Fqbar};\Q_\ell)$ is a graded algebra generated by classes in degree $2r-1$ with eigenvalues of $\Frob_q$ equal to $q^{r}$. As a result, for any character polynomial $P_\lambda$:
    \begin{align*}
        \tr(\Frob_q\colon &H^i_{\et}(\UConf_n(\Ab^r);\V)^\ast\to H^i_{\et}(\UConf_n(\Ab^r);\V)^\ast)\\
        &\\
        &=\left\{\begin{array}{ll}
        0 & \text{if } i\neq k(2r-1)\\
        q^{-kr}\langle P_\lambda,H^i_{\et}(\PConf_n(\Ab^r);\Q_\ell)\rangle & \text{if } i=k(2r-1)
        \end{array}\right.
    \end{align*}
    Here we have, as above, applied Poincar\`e duality to replace the compactly supported cohomology of the smooth schemes $\UConf_n(\Ab^r)$ with (the Tate twist of) the dual of ordinary \'etale cohomology.  We thus have, for all $P$,
    \begin{equation*}
        \lim_{n \to \infty} q^{-nr} \sum_{y \in\UConf_n(\Ab^r)(\F_q)} P(y) = \sum_{i=0}^\infty
        (-1)^{i(2r-1)} q^{-ir}\langle P,H_{\et}^{i(2r-1)}(\PConf(\Ab^r))\rangle.
    \end{equation*}
\end{example}

\bigskip{\noindent
Dept. of Mathematics, University of Chicago\\
E-mail: farb@math.uchicago.edu, wolfson@math.uchicago.edu
}

\end{document}